\pgfplotsset{compat=1.14}
\newcommand{\ex}{\mathbb{E}}
\newcommand{\m}{\mathrm{m}}
\newcommand{\us}{u^s\(r\)}
\newcommand{\fdr}{\frac{\mathop{du^s}}{\mathop{dr}}\(r\)}
\renewcommand{\(}{\left(}
\renewcommand{\)}{\right)}
\DeclareMathOperator*{\argmax}{arg\,max}
\begin{document}

\markboth{Stefanos Leonardos and Costis Melolidakis}
{Endogenizing the cost parameter in Cournot oligopoly}

\catchline{}{}{}{}{}

\title{ENDOGENIZING THE COST PARAMETER IN COURNOT OLIGOPOLY\footnote{
This research has been supported by the Research Funding Program ARISTEIA II: "Optimization of stochastic systems under partial information and applications" of NSRF.}}
\author{Stefanos Leonardos\,\, and Costis Melolidakis}
\address{Department of Mathematics, National \& Kapodistrian University of Athens,\\ 
Panepistimioupolis GR - 157 84, Athens, Greece\,\\ 
\email{sleonardos@math.uoa.gr, cmelol@math.uoa.gr}}

\maketitle

\begin{history}
\received{(Day Month Year)}
\revised{(Day Month Year)}
\end{history}

\begin{abstract}
We study the effects of endogenous cost formation in the classic Cournot oligopoly through an extended two-stage game. The competing Cournot firms produce low-cost but limited quantities of a single homogeneous product. For additional procurements, they may refer to a revenue-maximizing supplier who sets a wholesale price prior to their orders. We express this chain as a two-stage game and study its equilibrium under two different information levels: complete and incomplete information on the side of the supplier about the actual market demand. In the deterministic case, we derive the unique subgame perfect Nash equilibrium for different values of the retailers' capacity levels, supplier's cost and market demand. To study the incomplete information case, we model demand uncertainty via a continuous probability distribution. Under mild assumptions, we characterize the supplier's optimal pricing policy as a fixed point of a proper translation of his expectation about the orders that he will receive from the retailers. If this expectation is decreasing in his price, then such an optimal policy always exists and is unique. Based on this characterization, we are able to proceed with comparative statics and sensitivity analysis, both analytically and numerically. Incomplete information gives rise to market inefficiencies because the supplier may ask for a too high price. Increasing supplier's cost results in increasing wholesale prices, decreasing orders from the retailers and hence decreasing consumer surplus. Increasing retailers' production capacities results in decreasing wholesale prices and increasing consumer surplus. Finally, as the number of second-stage retailers increases, the supplier's profit may initially rise but eventually drops.
\end{abstract}
\keywords{Cournot Nash; Nash Equilibrium; Duopoly; Capacity; Incomplete Information; Decreasing Mean Residual Life; Existence; Uniqueness}
\ccode{Mathematics Subject Classification (2000): 91A10, 91A40.}
\section{Introduction}\label{secintroduction}
Modern oligopolistic firms build up their retail stock from various sources that may include in-house production or procurements from large, internationally operating manufacturers. These suppliers also act strategically by setting wholesale prices to maximize their revenues. Hence, the robustness and validity of economic intuitions that have been obtained under classic assumptions of cost rigidity as in the standard Cournot quantity-competition model need to be revised under more realistic, yet mathematically tractable cost structures.\par
The main objective of this paper is to provide the proper game-theoretic framework to study the cost formation as a strategic decision in the standard Cournot oligopoly. To endogenize the cost, we extend the classic model in a two-stage game. In the second stage, the competing firms produce limited quantities of the product up to a specified capacity. For additional procurements, they refer to an external supplier, who has ample capacity but may be uncertain about the actual demand that the retailers are facing. The supplier acts as a Stackelberg leader and sets the wholesale price in the first stage, prior to the decision of the retailers. We study a complete and incomplete information model and apply the subgame-perfect Nash and Bayes-Nash equilibrium concepts, respectively, to analyze the strategic decisions of the market participants and gain economic intuition. \par
In a closely motivated study, \cite{Ma15} point out that the problem of strategic cost formation has not yet been appropriately addressed in the quite extensive Cournot literature. The classic model's assumption of constant, exogenously given marginal cost does not reflect the complex cost structures of contemporary economic practice. To address this issue, they study the game that arises when competing Cournot firms purchase their inputs from a common supplier. They examine contracts, with the competing firms having the bargaining power, and hence, their analysis departs from the present study. \cite{Ca04} highlight the need for a game theoretical analysis of more dynamic settings in oligopolistic markets. They report only scarce applications of the subgame-perfect equilibrium, all in settings quite different from ours. Since then, several papers have appeared in the relevant literature. \cite{Es09} apply the Stackelberg-strategy solution concept to study the interaction between a seller and a buyer. However, their interest shifts to investigating the impact of marketing (advertising) expenditures. \cite{Zh10} study wholesale pricing schemes between manufacturers and retailers, but their focus is on cooperative mechanisms that result in inventory coordination and supply chain improvement. \par
The present paper focuses on the relationship between market demand and various costs as the key in studying the effects of an exogenous source of supply for Cournot oligopolists. If the demand is high enough, the competing firms may have incentive to place orders with an external supplier, depending, of course, on the price he asks. In such a study, various questions have to be addressed: Do the firms have production capacities of their own? If yes, then these capacities have to be assumed bounded (as they are in reality) so that need may arise for additional procurements. Does the supplier know the actual demand the oligopolists face? If no, then he may ask for a price that the competing firms will not accept, even if it is to his advantage not to do so. The latter question implies an incomplete information game setup. By viewing the interaction of the competing Cournot oligopolists with their supplier as a two-stage game, the cost parameter of the classic Cournot model is endogenized, and answers can be worked out.

\subsection{Model summary}
In detail, inspired by the classic Cournot oligopoly in which producers/retailers compete over quantity, we study the market of a homogeneous good differing from the classic model in the following ways. Each producer/retailer may produce only a limited quantity of the good up to a specified and commonly known production capacity. If needed, the retailers may refer to a single supplier to order additional quantities. The supplier may produce unlimited quantities but at a higher cost than the retailers, making it best for the retailers to exhaust their production capacities before placing additional orders. The market clears at a price that is determined by an affine inverse demand function. The demand parameter or demand intercept is considered a random variable with a commonly known non-atomic, i.e., continuous, distribution having a finite expectation. \par
Depending on the time of the demand realization, two variations of this market structure are examined, corresponding to a complete and an incomplete information two-stage game. In the first stage, the supplier fixes a price by deciding his profit margin and then, in the second stage, the retailers, knowing this price and the true value of the demand parameter, decide their production quantity -- up to capacity -- and place their orders, if any. The decisions of the producers/retailers are simultaneous. The supplier may -- complete information -- or may not -- incomplete information -- know the true value of the demand parameter before making his decision. In the second case, when the demand is realized after the supplier has set his price, the demand parameter or equivalently the supplier's belief about it is modelled by a continuous probability distribution. In both variations, demand uncertainty is resolved prior to retailers' decision. In this respect, our setup differs from the usual incomplete information models of Cournot markets, in which demand uncertainty involves the producers, e.g., see \cite{Ei10} and references given therein. \par 
The limited production capacities of the producers/retailers may equivalently be interpreted as inventory quantities that are drawn at a fixed cost per unit. If the production capacities are $0$, this is a classic Cournot model with the cost input determined exogenously. This case exhibits independent interest and has been exhaustively treated in a companion work \cite{Le17}.\footnote{Other related results include \cite{Le18,Leo20,Leo21}, preliminary versions of which appear in \cite{Bel18,Ko18}.}. The present market model aims to describe a wide range of composite cost structures that are encountered in contemporary practice. The energy market, in which domestic firms produce up to limited capacities and refer to international suppliers in periods of high demand, agriculture, apparel, food products and numerous more examples can be thought of as real economic applications. In all these cases, the suppliers have ample quantities to cover any domestic demand. Hence, they only need to decide on the wholesale price that they will ask. However, due to their distance from the retail market, they may have limited information about the actual retail demand, an aspect that we implement by treating the demand parameter as a random variable on the side of the supplier. The realization of the demand after the pricing decision of the supplier is the device to implement this uncertainty. The model assumptions are further discussed in \Cref{discussion}.

\subsection{Overview of results}
For a more transparent exposition, we first analyze the case of $n=2$ competing retailers (duopoly) and then generalize our results to the case of arbitrary $n>2$ competing retailers (oligopoly). We study the equilibrium behavior of the two-stage game by first determining the unique equilibrium solution of the second-stage game, which concerns the quantities that the producers/retailers will produce and the additional quantities they will order from the supplier. The equilibrium strategies are given in \Cref{prop3.3full} and \Cref{fig:3}. The second stage is the same in both the complete and the incomplete information case. Then, we examine the first-stage game, which concerns the price the supplier will ask for the product. In the case of the complete information duopoly, a unique subgame-perfect Nash equilibrium always exists, which we explicitly determine in \Cref{prop3.4full} for all values of production capacities $T_1,T_2$, supplier's cost $c$ and demand level $\alpha$. While equilibrium strategies, payoffs of the retailers, and equilibrium payoff of the supplier are continuous and increasing in the demand level $\alpha$, the supplier's optimal pricing strategy may not be continuous and even not monotonic in $\alpha$. Under the assumption of symmetric (identical) retailers, i.e., $T_1=T_2=T$, the complete information case simplifies significantly. The resulting equilibrium is given in \Cref{thm3.5} and allows for a discussion on the impact of the retailers' capacities on the consumer surplus. \par
Our focus is mainly on the incomplete information case. In \Cref{thm4.1}, we show that if a subgame perfect Bayesian-Nash equilibrium exists, then it will necessarily be a fixed point of a translation of the Mean Residual Lifetime (MRL) function of the supplier's belief. If the support of the supplier's belief is bounded, then such an equilibrium exists. If the MRL function is decreasing (as in most ``well behaved'' distributions), then a subgame perfect equilibrium always exists and is unique, irrespective of the supplier's belief support. The fixed point characterization of the supplier's equilibrium strategy provides a powerful tool for a transparent comparative statics and sensitivity analysis. \par
Accordingly, in \Cref{secinef}, we study market inefficiencies due to incomplete information. Through analytical considerations and numerical simulations, we reason about the supplier's incentives to charge a too high price, despite running a considerable risk of no transactions between him and the retailers. In \Cref{inventory}, we observe that the supplier's profit margin and wholesale price both exhibit a reverse monotone relationship to the retailers' inventory level. Similarly, as the supplier's cost increases, his profit margin decreases. However, in this case, the wholesale price that he asks increases and the total quantity ordered by the retailers decreases, see \Cref{cost}. Consequently, the total quantity that is sold to the consumers decreases, which has a negative impact on consumer surplus. We confirm these observations numerically. Finally, in \Cref{thmnretailers} we show that \Cref{thm4.1} generalizes to an arbitrary number of second-stage retailers which enables the study of their impact on the supplier's profit. Numerical simulations with specific distributions reveal that while the supplier may initially benefit from an increasing number of retailers, eventually his profits will drop as their number, and consequently their total in-house production capacities, continue to increase. Accordingly, our main contributions can be summarized in the following points
\begin{itemlist}
\item formulation of a game-theoretic framework to incorporate the complex cost structure of modern oligopolistic Cournot-firms.
\item closed form characterization of the equilibria in both complete and incomplete information market settings; 
\item utilization of the MRL function, which, although useful to study stochastic payoff functions, has received limited attention in the revenue management literature; and 
\item comparative statics and sensitivity analysis of the basic model parameters -- facilitated by the previous characterizations -- to understand the economic implications of the present model, both analytically and numerically.
\end{itemlist}
Still, the complexity and variety of real economic models provide numerous possibilities for future research and extensions. However, the proposed model aims to provide a benchmark for related studies, due to its mathematical tractability that is mainly highlighted by the closed form characterizations of the equilibrium strategies both in the complete and incomplete information settings. For a more detailed discussion of the economic assumptions and possible extensions of the present model, see \Cref{discussion} and \Cref{secfuture}.

\subsection{Related Work}
The current work combines elements from various active research areas. Cournot competition with an external supplier having uncertainty about the retail demand may be viewed as an application of game theoretic tools in supply chain management. However, our interest is in the equilibrium behavior of the complete and incomplete information two-stage game rather than in building coordination mechanisms. The incomplete information of the supplier relates our work to incomplete information Cournot models, although the lack of information refers to the producers in most papers in this area, see e.g., \cite{Be05} or \cite{Wu16}. \par
Price-only contracts between suppliers and retailers, like the one described here, continue to attract widespread attention in the literature. \cite{La01} argue that the simplicity of price-only contracts makes them particularly attractive if they do not significantly reduce supply-chain efficiency. Since unmodeled factors are unlikely to reverse the insights derived by such models, they conclude that as such, they provide (at least in many cases) a benchmark, worst-case analysis. \cite{Pe07} highlight the practical prevalence of price-only contracts and study the efficiency of various decentralized supply chains that use price-only contracts by measuring the respective Price of Anarchy of the chain. In a similar fashion to our \Cref{thmbound}, they obtain bounds on the market fashion by restricting attention to the IGFR class of random variables. \par 
\cite{Ad15} study a supply chain with a single supplier and symmetric retailers analytically and find that the supplier prefers to have as many retailers as possible in the market, even in several restrictive cases. This is in contrast to our findings in \Cref{sec7}, which shows that such conclusions may be model-specific. In a similar model, \cite{Wu12} investigate the pricing decisions in a non-cooperative supply chain that consists of two retailers and one common supplier. As the supplier's decision variables, they consider the wholesale prices to the retailers, and argue that under active market regulations (Robinson-Patman Act) the supplier is required to charge a uniform price -- and hence not price-differentiate between the retailers -- when the retailers place their orders simultaneously, as in our model. In their approach however, each retailer's decision is the sale price to the market, and hence their analysis differs from ours.\par
Having capacity constraints for the producers/consumers has also attracted attention in the Cournot literature, see e.g., \cite{Bi09}. \cite{Be05} investigate the equilibrium behavior of a decentralized supply chain with competing retailers under demand uncertainty. Their model accounts for demand uncertainty from the retailers' point of view as well and focuses mainly on the design of contracts that will coordinate the supply chain. Based on whether the uncertainty of the demand is resolved prior to or after the retailers' decisions, they identify two main streams of literature. For the first stream, in which the present paper may be placed, they refer to \cite{Vi01} for an extensive survey. \par
However, capacity constrained duopolies are mostly studied in view of price rather than quantity competition. \cite{Os86}, and the references therein are among the classics in this field. Equally common is the study of models in which the capacity constraints are viewed as inventories kept by the retailers at a lower cost. Papers in this direction focus mainly on determining optimal policies in building the inventory over more than one period. \cite{Ha15} and the references therein are indicative of this field of research.\par
\cite{Ei10} examine Cournot competition under incomplete producers' information about demand and production costs. They provide examples of such games without a Bayesian Cournot equilibrium in pure strategies, discuss the implication of not allowing negative prices and provide additional sufficient conditions that will guarantee existence and uniqueness of equilibrium. \cite{Ri13} discusses Cournot competition under incomplete producers' information about their production capacities and proves the existence of equilibrium under the assumption of stochastic independence of the unknown capacities. He also discusses simplifications of the inverse demand function that result in symmetric equilibria and implications of information sharing among the producers. \par

\subsection{Outline}
The rest of the paper is structured as follows. In Sections 2--6, we discuss a market with one supplier and two producers/retailers. In particular, in \Cref{secmodel}, we build up the formal setting for the market model. In \Cref{sec3}, we present some preliminary results and treat the complete information case. In \Cref{sec4}, we analyze the model of incomplete information. In \Cref{secinef}, we compare the complete and incomplete information equilibrium outcomes and study both analytically and numerically the inefficiencies that occur in the incomplete information setting. In \Cref{seccost}, we perform sensitivity analysis on the model parameters and perform numerical simulations to illustrate the results. Finally, in \Cref{sec7}, we generalize \eqref{thm4.1} to the case of $n>2$ identical retailers and study their impact on the supplier's profits. \par 
To focus on the economic and game-theoretic aspects of the analysis, all proofs are presented in \ref{app1}. While proving the statements for the complete information case is by exhaustive case discrimination, the proofs concerning the incomplete information case utilize probabilistic results from the theory of the mean residual life (MRL) function and may be of independent interest. The MRL function is widely used in reliability analysis, but its formal applications in economics are still scarce.\par
Throughout the rest of this paper we drop the double name ``producers/retailers'' and use only the term ``retailers''.

\section{The Model}\label{secmodel}
We consider the market of a homogenous good that consists of two producers/retailers ($R_1$ and $R_2$) who compete over quantity ($R_i$ places quantity $Q_i$), and a supplier (or wholesaler) under the following assumptions.  
\begin{arabiclist}
\item The retailers may produce quantities $t_1$ and $t_2$ up to a capacities $T_1$ and $T_2$, respectively, at a common fixed cost $h$ per unit normalized to zero\footnote{See \Cref{secintroduction} for an alternative interpretation of these quantities as inventories.}. 
\item Additionally, they may order quantities $q_1, q_2$ from the supplier at a price $w$ set prior to and independently of their orders. The total quantity $Q_i\(w\), i=1,2$ that each retailer releases to the market is equal to the sum 
\begin{equation*}Q_i\(w\):=t_i\(w\)+q_i\(w\) \end{equation*}
or shortly $Q_i:=t_i+q_i$, where, for $i=1,2$, the variable $t_i\le T_i$ is the quantity that retailer $R_i$ produces by himself or draws from his inventory (at normalized zero cost) and $q_i$ is the quantity that the retailer $R_i$ orders from the supplier at price $w$. 
\item The supplier may produce unlimited quantities of the good at a cost $c$ per unit. We assume that the retailers are more efficient in the production of the good or equivalently that $c>h$. After the normalization of the retailers' production cost $h$ to $0$, the rest of the parameters, i.e., $w$ and $c$ are also normalized. Thus, $w$ represents a normalized price, i.e., the initial price that was set by the supplier minus the retailers' production cost and $c$ a normalized cost, i.e., the supplier's initial cost minus the retailers' production cost. The supplier's profit margin $r$ is not affected by the normalization and is equal to 
\begin{equation*} r:=w-c \end{equation*} 
\item After the retailers set the total quantity $Q=Q_1+Q_2$ that will be released to the market, the market clears at a price $p$ that is determined by an inverse demand function, which we assume to be affine\footnote{After normalization of the slope parameter (initially denoted with $\beta$) of the inverse demand function to $1$, all variables in \eqref{demand} are expressed in the units of quantity and not in monetary units and therefore any interpretations or comparisons of the subsequent results should be done with caution.}
\begin{equation}\label{demand} p=\alpha-Q \end{equation} 
\item The demand parameter $\alpha$ is a non-negative random variable with finite expectation $\ex \(\alpha\)<+\infty$ and a continuous cumulative distribution function (cdf) $F$ (i.e., the measure induced on the space of $\alpha$ is non-atomic). We will assume that $\alpha \ge h$ for all values of $\alpha$, i.e., that the demand parameter is greater or equal to the retailers' production cost. The latter assumption is consistent with the classic Cournot duopoly model, which is  resembled by the second stage of the game (however, the second-stage game is not a classic Cournot duopoly due to the capacity constraints $T_1$ and $T_2$ and to the possibility of $w>\alpha$). After normalization, in what follows, we will use the term $\alpha_H$ (resp. $\alpha_L$) to denote the lower upper bound (respectively the upper lower bound) of the support of $\alpha$.
\item The capacities $T_1, T_2$ and the distribution of the random demand parameter $\alpha$ are common knowledge among the three participants of the market (the retailers and the supplier).
\end{arabiclist}
Based on these assumptions, a strategy $s_i\(w\)$ for retailer $R_i, i=1,2$ is a vector valued function $s_i(w)=(t_i (w), q_i (w))$ or shortly a pair $s_i=(t_i, q_i)$ for $i=1,2$. Equation \eqref{demand} implies that $Q_i$ may not exceed $\alpha$ and hence the strategy set $\tilde{S}^i$ of $R_i$ will satisfy
\begin{equation}\label{strategy} \tilde{S}^i \subset \left\{(t_i,q_i) : 0\le t_i \le T_i \text{ and } 0\le q_i+t_i \le \alpha\right\} \end{equation}
Denoting by $s:=\(s_1, s_2\)$ a strategy profile, the payoff $u^i\(s \mid {w}\)$ of retailer $R_i, i=1,2$ will be given by 
\begin{equation}\label{3} u^i\(s\mid w\)= Q_i\(\alpha-Q\)-w  q_i=Q_i\(\alpha-w-Q\)+w  t_i \end{equation}
Whenever confusion may not arise, we will write $u^i(s)$ instead of $u^i(s\mid w)$.
A strategy for the supplier is the price $w$ he charges to the retailers or equivalently his profit margin $r$. From \eqref{3}, we see that $w$ may not exceed $a_H$, because otherwise the retailers will not order. Additionally, it may not be lower than $c$, since in that case, his payoff will become negative. Hence, in terms of his profit margin $r$, the strategy set $R$ of the supplier satisfies 
\begin{equation}\label{4}R\subset \{r : 0 \le r \le \alpha_H-c\} \end{equation}
Consequently, a reasonable assumption is that $c<\alpha_H$, because otherwise, the problem becomes trivial from the supplier's perspective.
For a given value of $\alpha$, the supplier's payoff function, stated in terms of $r$ rather than $w=r+c$, is given by 
\begin{equation}\label{5}u^s\(r\mid \alpha\)=r\(q_1\(w\)+q_2\(w\)\) \end{equation}
On the other hand, it is not necessary for the retailers to know the exact values of $c$ and $r$, and hence, from their point of view (second-stage game), we keep the notation $w=r+c$. If the supplier does not know $\alpha$ (incomplete information case), then his payoff function will be 
\begin{equation}\label{6}\us:=\ex u^s\(r\mid \alpha\) \end{equation}
where the expectation is taken with respect to the distribution of $\alpha$. 

\subsection{Two market models with different information structure}
To proceed with the formal two-stage game model, we recall that both the production/inventory capacities $T_1$ and $T_2$ of the retailers and the distribution of the demand parameter $\alpha$ are common knowledge to the three market participants. We then have, 
\begin{itemlist}
\item \textbf{Complete Information Case:} The demand parameter $\alpha$ is realized and observed by both the supplier and the retailers\footnote{Of course, this means that there is no randomness in $\alpha$, so the description of $\alpha$ as a random variable is redundant in the complete information case. We use it just to give a common formal description of both the complete and the incomplete information case.}. Then, at stage 1, the supplier fixes his profit margin $r$ and hence his price $w$. His strategy set and payoff function are given by \eqref{4} and \eqref{5}. At stage 2, based on the value of $w$, each competing retailer chooses the quantity $Q_i, i=1,2$ that he will release to the market by determining how much quantity $t_i$ he will draw (at zero cost) from his inventory $T_i$ and how much additional quantity $q_i$ he will order (at price $w$) from the supplier. The strategy sets and payoff functions of the retailers are given by \eqref{strategy} and \eqref{3}, respectively.
\item\textbf{Incomplete Information Case:} At stage 1, the supplier chooses $r$ without knowing the true value of $\alpha$. His strategy set remains the same, but his payoff function is now given by \eqref{6}. After $r$, and hence $w=r+c$, are fixed, the demand parameter $\alpha$ is realized, and along with the price $w$ is observed by the retailers. Then we proceed to stage 2, which is identical to that of the Complete Information Case. 
\end{itemlist}
All the above are assumed to be common knowledge of the three players.

\section{Subgame-perfect equilibria under complete information}\label{sec3}
First, we treat the case with no uncertainty on the side of the supplier about the demand parameter $\alpha$. The subgame perfect equilibria of this two-stage game are determined in \Cref{sub3.1,sub3.2}. As is intuitively expected, it is best for the retailers to produce up to their capacity constraints or equivalently to exhaust their inventories before ordering additional quantities from the supplier at unit price $w$. For simplicity in the notation of \Cref{lem3.1,lem3.2}, fix $i \in \{1,2\}$ and let $T_i:=T$. As above, $Q_i=t_i+q_i$.
\begin{lemma}\label{lem3.1}
Any strategy $s_i=\(t_i, q_i\) \in \tilde{S}^i$ with $t_i<T$ and $q_i>0$ is strictly dominated by a strategy $s_i'=\(t_i',q_i'\)$ with \begin{equation*} \(t_i',q_i'\)=\begin{cases}\(T, Q_i-T\), & \text{ if }Q_i\ge T \\ \(Q_i, 0\), & \text{ if }Q_i< T \end{cases}\end{equation*} 
or equivalently by $\(t_i',q_i'\)=\(\min{\{Q_i,T\}}, \(Q_i-T\)^+\)$.
\end{lemma}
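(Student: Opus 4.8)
The plan is to exploit that the replacement strategy $s_i'$ leaves the total quantity $Q_i = t_i + q_i$ that $R_i$ releases to the market unchanged, so that it can affect the payoff only through the order cost. First I would note, directly from the definition, that $t_i' + q_i' = \min\{Q_i, T\} + (Q_i - T)^+ = Q_i$ in either case, whence $Q_i' = Q_i$. Since the rival's contribution is untouched, the aggregate quantity $Q = Q_1 + Q_2$, and hence the clearing price $p = \alpha - Q$, is identical under $s_i$ and $s_i'$ for every fixed strategy $s_j$ of the opponent $R_j$.

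With the revenue term thus pinned down, I would compare the two payoffs through the second expression for $u^i$ in \eqref{3}, namely $u^i(s \mid w) = Q_i(\alpha - w - Q) + w t_i$. Because $Q_i$ and $Q$ agree across the two strategies, the difference collapses to $u^i(s_i', s_j \mid w) - u^i(s_i, s_j \mid w) = w(t_i' - t_i)$. When $Q_i \ge T$ this equals $w(T - t_i) > 0$ since $t_i < T$; when $Q_i < T$ it equals $w(Q_i - t_i) = w q_i > 0$ since $q_i > 0$. As $w = r + c \ge c > 0$ (recall $c > h = 0$ and $r \ge 0$), the gain is strictly positive and, crucially, independent of $s_j$ --- which is exactly what is required for strict dominance rather than a mere best-response improvement.

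It then remains only to confirm that $s_i'$ is admissible, i.e.\ that $s_i' \in \tilde{S}^i$, by checking it against \eqref{2}: $t_i' \in [0, T]$ by construction, $q_i' = (Q_i - T)^+ \ge 0$, and $t_i' + q_i' = Q_i = t_i + q_i \in [0, \alpha]$ because the original pair was feasible. I do not expect a genuine obstacle here. The only points that need care are recording that $w > 0$ and emphasizing that the equality $Q_i' = Q_i$ makes the improvement uniform in the rival's play, so that the domination is strict in the game-theoretic sense rather than merely pointwise for a single profile of the opponent.
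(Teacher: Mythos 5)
Your proposal is correct and follows essentially the same route as the paper: both arguments fix the rival's strategy, observe that $Q_i$ (and hence the revenue term) is unchanged under the replacement, and reduce the comparison to the sign of $w(t_i'-t_i)$, which is $w(T-t_i)>0$ when $Q_i\ge T$ and $wq_i>0$ when $Q_i<T$. Your explicit remarks that $w\ge c>0$ and that $s_i'$ remains feasible are minor additions the paper leaves implicit.
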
 
\begin{proof} Let $Q_i\ge T$. Then for any $s_j \in \tilde{S}^j$, \eqref{3} implies that $u^i\(s_i, s_j\)=u^i\(s'_i,s_j\)+w\(t_i-T\)$. Since $t_i<T$, the result follows. Similarly, if $Q_i< T$, then for any $s_j \in \tilde{S}^j$, \eqref{3} implies that
$u^i\(s_i, s_j\)=u^i\(s'_i,s_j\)-wq_i.$
\end{proof} 
\noindent Accordingly, we restrict attention to the strategies in 
\begin{align*}S^i=&\bigg\{\(t_i,q_i\) : 0\le t_i < \min{\{\alpha, T\}}, q_i=0\bigg\}\bigcup \\& 
\bigcup\bigg\{\(t_i,q_i\): T=\min{\{\alpha, T\}}, 0\le q_i\le \(\alpha-T\)^+\bigg\} \end{align*}
\Cref{fig:1a} depicts the set $S^i$ when $T<\alpha$ and \Cref{fig:1b} when $T\ge\alpha$.
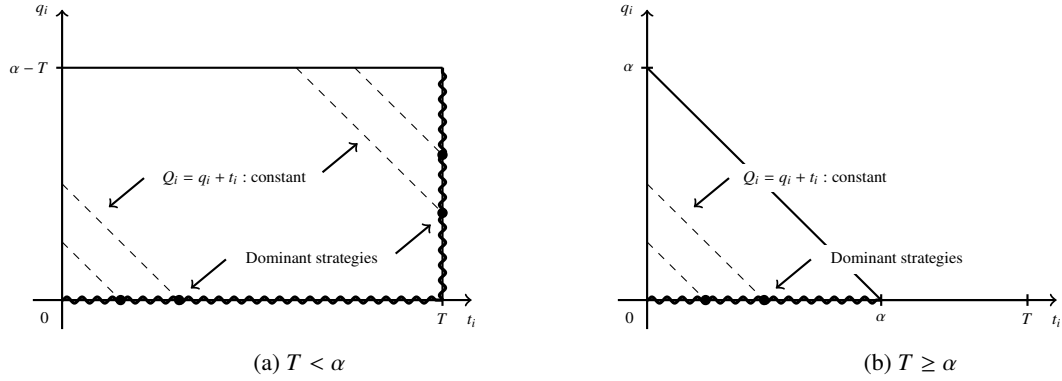
\begin{figure}[!htp]\centering
\begin{subfigure}{.49\textwidth}
\begin{tikzpicture}[domain=0:8,thick,scale=0.77, every node/.style={transform shape}] 
\draw[->](-0.5,0) -- (7,0);
\draw[->](0,-0.5) -- (0,5);
\draw[thick] (7,-0.13) node[below]{$t_i$}(-0.1,5) node[left]{$q_i$} (6.5,4)--(6.5,-0.1) node[below]{$T$} (6.5,0)--(0,0)
(-0.1,4)  node[left]{$\alpha-T$} --(6.5,4) (-0.3, -0.3) node{$0$};
\draw[very thick, decorate,decoration={snake,amplitude=.4mm,segment length=2mm}](0,0)--(6.5,0); 
\draw[very thick, decorate,decoration={snake,amplitude=.4mm,segment length=2mm,post length=0mm}](6.5,0)--(6.5,4);
\draw[->] (1.4,2.1) node[above, right]{\hspace{3pt} $Q_i=q_i+t_i:$ constant}--(0.8,1.6);
\draw[->] (4.45,2.2)--(5.05,2.7);
\draw[->] (2.8, 0.7) node[above, right]{\hspace{6pt}Dominant strategies}--(2.2, 0.2);
\draw[->] (5.7, 0.8)--(6.3, 1.3);
\draw[dashed, thin] (0,1)--(1,0) (0,2)--(2,0) (5,4)--(6.5,2.5) (4, 4)--(6.5,1.5);
\filldraw (2,0) circle (2pt) (1,0) circle (2pt) (6.5,2.5) circle (2pt) (6.5,1.5) circle (2pt);
\end{tikzpicture}
\caption{$T<\alpha$}
\label{fig:1a}
\end{subfigure}%
\hspace{0.15cm}
\begin{subfigure}{0.49\textwidth}
\begin{tikzpicture}[domain=0:8, thick, scale=0.77, every node/.style={transform shape}] 
\draw[->](-0.5,0) -- (7,0);
\draw[->](0,-0.5) -- (0,5);
\draw[thick](-0.1,5) node[left]{$q_i$} (4,0.1)--(4,-0.1) node[below]{$\alpha$} (6.5,0)--(0,0) (-0.1,4)  node[left]{$\alpha$} --(0.1,4) (0,4)--(4,0) (6.5,0.1)--(6.5,-0.1) node[below]{ $T$} (-0.3, -0.3) node{$0$} (7,-0.13) node[below]{$t_i$};
\draw[very thick, decorate,decoration={snake,amplitude=.4mm,segment length=2mm}](0,0)--(4,0); 
\draw[->](1.4,2.1) node[above, right, fill=white]{\hspace{3pt}$Q_i=q_i+t_i:$ constant}--(0.8,1.6);
\draw[->](2.8, 0.7) node[above, right, fill=white]{\hspace{6pt}Dominant strategies}--(2.2, 0.2);
\draw[dashed, thin](0,1)--(1,0) (0,2)--(2,0);
\filldraw (2,0) circle (2pt) (1,0) circle (2pt);
\end{tikzpicture}
\caption{$T\ge \alpha$}
\label{fig:1b}
\end{subfigure}
\caption{Retailers strategy set $S^i$.}
\label{fig:1}\end{figure}

As shown below, Lemma \ref{lem3.1} considerably simplifies the maximization of $u^i\(\cdot, s_j\)$, since for any strategy $s_j$ of retailer $R_j$, the maximum of $u^i\(\cdot, s_j\)$ will be attained at the bottom or right-hand side boundary of the region $[0,\min{\{\alpha,T\}}]\times\left[0,\(\alpha-T\)^+\right]$. Moreover, Lemma \ref{lem3.1} implies that when $\alpha\le T$, retailer $R_i$ will order no additional quantity from the supplier\footnote{In Proposition \ref{prop3.3} we will see that in equilibirum retailer $R_i$ will order no additional quantity if $\alpha\le 3T$.}. Although trivial, we may not exclude this case in general, since in \Cref{sec4}, we consider $\alpha$ to be varying. 

\subsection{Unique second-stage equilibrium strategies}\label{sub3.1}
Restricting attention to $S^i$ for $i=1,2$, we obtain the best reply correspondences $\operatorname{BR^1}$ and $\operatorname{BR^2}$ of retailers $R_1$ and $R_2$, respectively. To proceed, we notice that the payoff of retailer $R_i, i=1,2$ depends on the total quantity $Q_j$ that retailer $R_j, j=3-i$ releases to the market and not on the explicit values of $t_j, q_j$, cf. \eqref{3}.
\begin{lemma} \label{lem3.2}
The best reply correspondence $\operatorname{BR}^i\(Q_j\)=\(t_i, q_i\)$ of retailer $R_i$ for $i=1,2$ is given by 
\[\operatorname{BR}^i(Q_j)=\left\{\begin{array}{lllrr}
\(T,\frac{\alpha-w-Q_j}{2}-T\), &\,\,\,&\text{ if }\,\, 0\le Q_j< \alpha-w-2T&& (1)\\ 
\(T, 0\), &&\text{ if }\,\, \alpha-w-2T\le Q_j< \alpha-2T&&(2)\\
\(\frac{\alpha-Q_j}{2}, 0\), & &\text{ if }\,\, \alpha-2T\le Q_j &&(3)
\end{array} \right.\]
\end{lemma}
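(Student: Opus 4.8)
The plan is to use \Cref{lem3.1} to collapse the two-dimensional maximization over $(t_i,q_i)\in S^i$ into a one-dimensional problem in the total quantity $Q_i=t_i+q_i$. By \eqref{3} the payoff $u^i=Q_i(\alpha-Q_i-Q_j)+w\,t_i$ is, for any fixed value of $Q_i$, strictly increasing in $t_i$ (since $w=r+c>0$); hence the optimal split is precisely the dominant one of \Cref{lem3.1}, namely $t_i=\min\{Q_i,T\}$ and $q_i=(Q_i-T)^{+}$. Substituting this back, the retailer's problem reduces to maximizing, over $0\le Q_i\le\alpha$, the function
\[
g(Q_i)=\begin{cases}
Q_i(\alpha-Q_j-Q_i), & 0\le Q_i\le\min\{\alpha,T\},\\
Q_i(\alpha-w-Q_j-Q_i)+wT, & T<Q_i\le\alpha.
\end{cases}
\]

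Next I would establish that $g$ is concave on $[0,\alpha]$, so that its maximizer is pinned down by a first-order analysis. Each branch is a downward parabola, so it suffices to examine the kink at $Q_i=T$ (relevant only when $T<\alpha$). The two branches agree there, so $g$ is continuous; and the left derivative $\alpha-Q_j-2T$ strictly exceeds the right derivative $\alpha-w-Q_j-2T$, precisely because $w>0$. Thus $g'$ is nonincreasing, $g$ is concave, and the global maximizer is determined by where $g'$ changes sign.

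I would then locate the maximizer by comparing the two unconstrained vertices, $Q_i^{(2)}=(\alpha-w-Q_j)/2$ of the upper branch and $Q_i^{(1)}=(\alpha-Q_j)/2$ of the lower branch (note $Q_i^{(2)}<Q_i^{(1)}$), against the kink $T$. If $Q_i^{(2)}>T$, equivalently $Q_j<\alpha-w-2T$, the maximum is interior to the upper branch at $Q_i=Q_i^{(2)}$, which after the split $t_i=T$, $q_i=Q_i^{(2)}-T$ gives case $(1)$. If $Q_i^{(1)}<T$, equivalently $Q_j>\alpha-2T$, the maximum is interior to the lower branch at $Q_i=Q_i^{(1)}$, giving $t_i=Q_i^{(1)}$, $q_i=0$, i.e. case $(3)$. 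In the intermediate range $\alpha-w-2T\le Q_j\le\alpha-2T$ the vertex of each branch lies on the wrong side of the kink, so by concavity the maximum is the corner $Q_i=T$, i.e. $t_i=T$, $q_i=0$, which is case $(2)$. Translating every optimal $Q_i$ back through $t_i=\min\{Q_i,T\}$, $q_i=(Q_i-T)^{+}$ then yields the stated correspondence.

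I expect the main obstacle to be the bookkeeping around feasibility and the degenerate regime $T\ge\alpha$. One must check that each candidate $Q_i$ actually lies in $[0,\alpha]$: nonnegativity of the lower-branch vertex uses $Q_j\le\alpha$, which holds because $s_j\in\tilde{S}^j$, while the upper-branch vertex is automatically below $\alpha$. When $T\ge\alpha$ the upper branch is vacuous and both thresholds $\alpha-w-2T$ and $\alpha-2T$ are negative, so the $Q_j$-ranges of cases $(1)$ and $(2)$ are empty and the correspondence collapses to case $(3)$; this is consistent with the observation following \Cref{lem3.1} that no additional quantity is ordered when $\alpha\le T$. Verifying that these three $Q_j$-ranges partition $[0,\alpha]$ and that the $T\ge\alpha$ case is subsumed is the routine but somewhat delicate part of the argument.
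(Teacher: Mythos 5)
Your proof is correct and follows essentially the same route as the paper's: both use \Cref{lem3.1} to restrict to the L-shaped boundary of the feasible set and then maximize the resulting quadratic payoff along it, the paper by separately optimizing over $t_i$ with $q_i=0$ and over $q_i$ with $t_i=T$ and noting the two interior-optimum conditions are mutually exclusive, you by gluing those same two branches into the single function $g(Q_i)$ and invoking its concavity at the kink. Your concavity packaging is a slightly cleaner way to justify the combination step, but the substance is identical.
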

\begin{proof} See \ref{app1}. Enumeration $\(1\), \(2\), \(3\)$ of the different parts of the best reply correspondence will be used for a more clear case discrimination in the subsequent equilibrium analysis. \end{proof}
A generic graph of the best reply correspondence $\operatorname{BR^1}$ of retailer $R_1$ to the total quantity $Q_2$ that retailer $R_2$ releases to the market is given in \Cref{fig:2}.
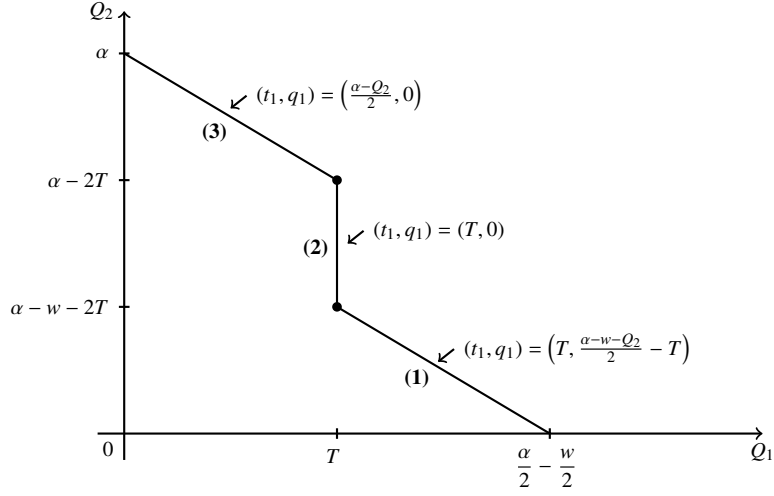
\begin{figure}[!htp]\centering
\begin{tikzpicture}[domain=0:12,thick, scale=0.7]
\filldraw (4,2.4) circle (2pt) (4,4.8) circle (2pt); 
\draw[->](-0.5,0) -- (12,0) node[below] {$Q_1$};
\draw[->](0,-0.5) -- (0,8) node[left] {$Q_2$};
\draw (4,0.1)--(4,-0.1) node[below]{$T\,\,$} (8,0.1)--(8,-0.1) node[below]{$\dfrac{\alpha}{2}-\dfrac{w}{2}\,\,$} 
(0.1,2.4)--(-0.1,2.4) node[left]{$\alpha-w-2T$} (0.1,4.8)--(-0.1,4.8) node[left]{$\alpha-2T$} (0.1,7.2)--(-0.1,7.2) node[left]{$\alpha$}
(0,7.2)--(4,4.8)--(4,2.4)--(8,0) (5.5,1.05) node {\textbf{(1)}} (3.6,3.5) node {\textbf{(2)}} (1.7,5.7) node {\textbf{(3)}} (-0.3, -0.3) node{$0$};
\draw[->](6.2,1.6) node[right]{$(t_1,q_1)=\(T,\frac{\alpha-w-Q_2}{2}-T\)$}--(5.9,1.36);
\draw[->](4.5,3.84) node[right]{$(t_1,q_1)=(T,0)$}--(4.2,3.6);
\draw[->](2.3,6.4) node[right]{$(t_1,q_1)=\(\frac{\alpha-Q_2}{2},0\)$}--(2,6.16);
\end{tikzpicture}
\caption{Best reply correspondence of Retailer $R_1$.}
\label{fig:2} \end{figure}
The equilibrium analysis of the second stage of the game proceeds in the standard way, i.e., with the identification of Nash equilibria through the intersection of the best reply correspondences $\operatorname{BR^1}$ and $\operatorname{BR^2}$. From the explicit form of the best reply correspondence that is given in Lemma \ref{lem3.2} (see also \Cref{fig:2}), it is straightforward that the equilibrium strategies depend on the values of $\alpha$ and $w$ and their relation to $T_1, T_2$. For convenience, we will denote with $\Gamma_{ij}$ the case that the equilibrium occurs as an intersection of parts $i,j$ for $i, j=1,2,3$. Since by assumption $T_1\ge T_2$, only the cases with $i\ge j$ (instead of all possible $9$ cases) may occur. 
\begin{proposition}\label{prop3.3full}
Given the values of $\alpha$ and $w$, the second-stage equilibrium strategies between retailers $R_1$ and $R_2$ for all possible values of $T_1 \ge T_2$ are given by
\begin{center}\begin{adjustbox}{max size={\textwidth}{\textheight}}
\begin{tabular}{c|l|l|l|l|l}
\midrule
\multicolumn{2}{l|}{}&\multicolumn{4}{c}{\textbf{Equilibrium Strategies}}\\
\cmidrule{3-6}\textbf{Case} & \textbf{Range of $\alpha$} &$t_1^*$ & $q_1^*$ & $t_2^*$ & $q_2^*$\\ \toprule
$\Gamma_{11}$ & $(3T_1+w, \infty)$ & $T_1$ & $\frac{\alpha-w}{3}-T_1$ & $T_2$ & $\frac{\alpha-w}{3}-T_2$\\
$\Gamma_{21}$ & $\(\max\left\{3T_1-w, T_1+2T_2+w\right\}, 3T_1+w\right]$ & $T_1$ & $0$ & $T_2$ & $\frac{\alpha-w-T_1}{2}-T_2$ \\
$\Gamma_{22}$ & $(2T_1+T_2, T_1+2T_2+w]$ & $T_1$ & $0$ & $T_2$ & $0$\\
$\Gamma_{31}$ & $(3T_2+2w, 3T_1-w]$ & $\frac{\alpha+w}{3}$ & $0$ & $T_2$ & $\frac{\alpha-2w}{3}-T_2$\\
$\Gamma_{32}$ & $\left(3T_2, \min\left\{2T_1+T_2, 3T_2+2w\right\}\right]$ & $\frac{\alpha-T_2}{2}$ & $0$ & $T_2$ & $0$ \\
$\Gamma_{33}$ & $[0, 3T_2]$ & $\frac{\alpha}{3}$ & $0$ & $\frac{\alpha}{3}$ & $0$ \\ \midrule
\end{tabular}\end{adjustbox}
\captionsetup{type=table} \captionof{table}{Second stage equilibrium strategies for all $T_1 \ge T_2$.}
\label{tab:1}\end{center}
\end{proposition}
The second-stage equilibria regions in the $T_1-T_2$ plane are depicted in \Cref{fig:3}. One should notice that the conditions under which cases $\Gamma_{ij}$ (second column of \Cref{tab:1}) apply are mutually exclusive, and hence, there exists a unique second-stage equilibrium. In more detail, if $T_1 > T_2$, exactly one of the following two mutually exclusive arrangements of the $\alpha$-intervals will obtain: either (a) $0 < 3T_2 < 2T_1+T_2 < T_1+2T_2+w < 3T_1+w < \infty$ with case $\Gamma_{31}$ empty or (b) $0 < 3T_2 < 3T_2+2w < 3T_1-w < 3T_1+w < \infty$ with case $\Gamma_{22}$ empty. If $T_1 = T_2$, then (a) obtains and the $\alpha$-intervals simplify to $0 < 3T < 3T+w < \infty$. Also, the retailers' equilibrium strategies are continuous at the cutting points of the $\alpha$-intervals, i.e., the ``{\em Range of} $\alpha$'' intervals of \Cref{tab:1} can be taken as left-hand side closed also. Generically, in the $T_1-T_2$ plane, the result of Proposition \ref{prop3.3full} is summarized in \Cref{fig:3}.
\begin{figure}[!htp] \centering
\begin{tikzpicture}[domain=0:12,scale=0.8,thick, yscale=0.9]
\draw[->] (-0.5,0) -- (11,0) node[below] {$T_1$}; 
\draw[->] (0,-0.5) -- (0,9) node[left] {$T_2$};
\draw[dashed] (6,6)--(6,-0.1) node[below]{$\dfrac{\alpha}{3}$}
(6,6)--(5.5,7) node[above] {\footnotesize $\alpha=2T_1+T_2$} 
(4,4)--(3,4.5) node[above] {\footnotesize $\alpha=T_1+2T_2+w\quad$} (8,2)--(9,0);
\draw[thick, domain=0:8.7] plot(\x,\x) node[right]{$T_1=T_2$} 
(-0.1,2)--(0.1,2) node[left]{$\dfrac{\alpha}{3}-\dfrac{2w}{3}\,\,$} 
(-0.1,4)--(0.1,4) node[left]{$\dfrac{\alpha}{3}-\dfrac{w}{3}\,\,$} (-0.1,6)--(0.1,6) node[left]{$\dfrac{\alpha}{3}\,\,$}
(4,4)--(4,-0.1) node[below]{$\dfrac{\alpha}3-\dfrac{w}{3}$} (8,2)--(8,-0.1) node[below]{$\dfrac{\alpha}{3}+\dfrac{w}{3}$}
(8,2)--(11,2) (4,4)--(8,2)--(6,6)--(11,6);
\draw (2.2,1) node{\textbf{$\left(\Gamma_{11}\right)$}} (6,1) node[fill=white]{\textbf{$\left(\Gamma_{21}\right)$}}
(9.5,1) node[fill=white]{$\left(\Gamma_{31}\right)$} (9.5,4) node{$\left(\Gamma_{32}\right)$}
(6,4) node[fill=white]{$\left(\Gamma_{22}\right)$} (9.5,7) node[fill=white]{$\left(\Gamma_{33}\right)$} 
(-0.3, -0.3) node{$0$};
\end{tikzpicture}
\caption{Second stage equilibria regions in the $T_1-T_2$ plane.}
\label{fig:3} \end{figure}
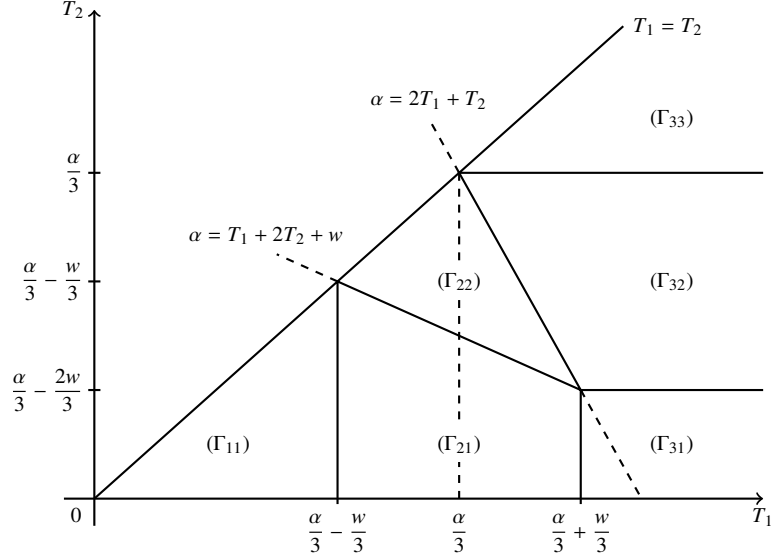
The boundaries of the different regions in \Cref{fig:3} (or equivalently the conditions in \Cref{tab:1}) depend on the values of both $w$ and $\alpha$. However, the point $\frac{\alpha}{3}$ on both the $T_1$ and the $T_2$ axes and the line $\alpha=2T_1+T_2$ that separates the cases $\Gamma_{22}$ and $\Gamma_{32}$ depend only on the value of $\alpha$ and not on $w$. Thus, they will serve as a basis for case discrimination when solving for the optimal strategy of the supplier in the first stage.\par

\subsection{First-stage supplier's equilibrium pricing strategy}\label{sub3.2}
Based on \Cref{fig:3} (or equivalently on \Cref{tab:1}) and using equation \eqref{5}, we can calculate the payoff function, $u^s\(r\mid \alpha\)$, of the supplier as $\alpha$ varies, when the retailers use their equilibrium strategies at the second stage. To proceed, we denote with $q^*_{ij}\(w\):=q^*_1\(w\)+q^*_2\(w\)$ the total quantity that the retailers order from the supplier when case $\Gamma_{ij}$ for $j\le i \in \{1,2,3\}$ occurs in the second stage. Then, we have the following
\begin{lemma}\label{supay}
Assuming that the retailers use their equilibrium strategies in the second stage, the supplier's payoff function is given by
\[u^s\(r\mid \alpha\) = r\cdot \begin{cases} 
q^*_{31}\(r+c\), & 0\le r \le \min{\left\{\frac12 \(\alpha-2c-3T_2\), 3T_1-c-\alpha\right\}}\\
q^*_{21}\(r+c\), & \max{\left\{0, |\alpha-3T_1|-c\right\}}\le r \le \alpha-c-T_1-2T_2\\
q^*_{11}\(r+c\), & 0\le r < \alpha-c-3T_1 \\
0, & \text{else}\end{cases}\] 
\end{lemma}

It is immediate that some cases may not occur for different values of the parameters. Hence, to derive the optimal strategy $r^*$ of the supplier (see proof of Proposition \ref{prop3.4full}), a further case discrimination is necessary. As we have already noted, the relative position of $\alpha$ to the quantities $3T_1, 3T_2$ and $2T_1+T_2$ will serve as a basis for case discrimination. Maximizing the payoff function of the supplier for each case yields his optimal strategy $r^*$ (i.e., the profit margin that maximizes his profits given that the retailers will play their equilibrium strategies in the next stage) for all possible values of $\alpha$ and $T_1, T_2$. \par
To simplify the expressions in the statement of \Cref{prop3.4full}, let $D:=T_1-T_2, \; \Delta:=\frac{\sqrt{3}+3}{2}D$. It will be convenient to distinguish two cases, depending on whether $0<c\le D$ or $0\le D<c$. Also, as mentioned, the second stage equilibrium quantities are continuous at the points at which their expression changes, and therefore, we allow the subsequent cases to overlap on the cutting points. 
\begin{proposition}\label{prop3.4full}
For given $T_1\ge T_2$, the supplier's optimal pricing strategy $r^*$ for all possible values of $\alpha$ is 
\[\begin{array}{ll|ll|ll}
\midrule
&&\textbf{Case:}& r^*\(\alpha\) && \textbf{Conditions}\\ \midrule
\textbf{Case A:} && \text{--}  & r^* \in \mathbb R_+ && 0\le \alpha \le 3T_2+2c\\
0<c\le D &&  \Gamma_{31}: & \frac14\(\alpha-2c-3T_2\) && 3T_2+2c \le \alpha \le 3T_2+\Delta-\frac{\sqrt{3}-1}{2}c\\
&& \Gamma_{21}: & \frac12\(\alpha-c-T_1 -2T_2\) && 3T_2+\Delta-\frac{\sqrt{3}-1}{2}c \le \alpha \le 3T_2+2\Delta+c\\
&& \Gamma_{11}: & \frac12\(\alpha-c-\frac32T_1-\frac32T_2\) && 3T_2+2\Delta+c \le \alpha\\\midrule
\textbf{Case B:} && \text{--} & r^* \in \mathbb R_+ && 0\le \alpha \le T_1+2T_2+c\\
0\le D < c && \Gamma_{21}: & \frac12\(\alpha-c-T_1 -2T_2\) && T_1+2T_2+c \le \alpha \le 3T_2+2\Delta+c\\
&& \Gamma_{11}: & \frac12\(\alpha-c-\frac32T_1-\frac32T_2\) && 3T_2+2\Delta+c \le \alpha\\\midrule
\end{array}\]\vspace{-0.3cm}
\captionsetup{type=table} \captionof{table}{First stage equilibrium strategies for all $\alpha\ge0$.}
\end{proposition}
Combining \Cref{prop3.3full,prop3.4full}, one obtains the subgame perfect equilibria of the two-stage game under complete information for the general case of asymmetric retailers and for all different values of $\alpha,c$ and $T_1\ge T_2$.
\subsection{Higher demand does not imply a higher wholesale price}
Although the supplier's payoff function $u^s\(r\mid \alpha\)$ is continuous at the cutting points of the $\alpha$-intervals, the same is {\em not} true for the supplier's optimal strategy as can be checked from Proposition \ref{prop3.4full}. For given $T_1, T_2$ with $T_1 \ge T_2$, not only $r^*\(\alpha\)$ is not continuous in $\alpha$, but it is also not increasing in $\alpha$ (although it is increasing on each sub-interval). At the points of discontinuity the supplier is indifferent between the left-hand side and right-hand side strategies. However, any mixture of these strategies does not yield the same payoff since his payoff is not linear in $r^*$. The reason for these discontinuities is that the supplier faces a piecewise linear demand instead of a linear demand. Therefore, at the cutting points there is a ``jump'' from the $\argmax$ of the first linear part to the $\argmax$ of the other linear part, hence the discontinuity. This also explains the decrease of the optimal price as we move from certain cutting points to the right, even if the increase in the demand intercept $\alpha$ is $\epsilon$-small. Of course, one can check that under his optimal strategy, not only the supplier's payoff is continuous, but it is also an increasing function of the demand $\alpha$, as expected. Formally,
\begin{corollary} The retailers' equilibrium strategies and payoff functions are continuous in the demand level $\alpha$. While the supplier's payoff is also continuous and increasing in $\alpha$, his equilibrium pricing policy is neither continuous nor monotonic in $\alpha$.
\end{corollary}

\subsection{The symmetric case: identical second-stage retailers}\label{symmetric}
The equilibrium analysis considerably simplifies if we restrict attention to the case of identical retailers, i.e., $T_1=T_2=T$. In this case, only symmetric equilibria may occur in the second stage. The equilibrium strategies depend on the value of $\alpha$ and its relative position to $3T$. The proofs of \Cref{prop3.3,prop3.4} follow immediately from \Cref{prop3.3full,prop3.4full} and are omitted.
\begin{corollary}\label{prop3.3}
If $T_1=T_2=T$, then for all values of $\alpha$, the second stage equilibrium strategies between retailers $R_1$ and $R_2$ are symmetric, and for $i=1,2$ they are given by $s^*_i\(w\)=\(t^*_i\(w\), q^*_i\(w\)\)$ with $t^*_i\(w\)= T-\frac13\(3T-\alpha\)^+$, and $q^*_i\(w\)= \frac13\(\alpha-3T-w\)^+$. 
\end{corollary}
The general form of the supplier's payoff function $u^s\(r\mid \alpha\)$ is given by \eqref{5} and his strategy set by \eqref{4}. Obviously, the supplier will not be willing to charge prices lower than his cost $c$. Based on the discussion of \Cref{prop3.3} and the constraint $w\ge c$ (i.e., $r\ge 0$), we conclude that a transaction will take place for values of $\alpha>3T+c$ and for $r\in [0, \alpha -3T -c)$. In that case, the optimal profit margin of the supplier is at the midpoint of the $r$-interval.\par
To see this, let $q^*\(w\):=q^*_1\(w\)+q^*_2\(w\)$ denote the total quantity that the supplier will receive as an order from the retailers when they respond optimally. By \Cref{prop3.3}, $q^*(w)=\frac23\(\alpha-3T-w\)^+$. Hence, on the equilibrium path and for $r \ge 0$, the payoff of the supplier is
\begin{equation}\label{pay}u^s\(r\mid \alpha\)=rq^*\(w\)=\frac23r\(\alpha-3T-c-r\)^+\end{equation}
We then have
\begin{corollary}\label{prop3.4}
For $T_1=T_2=T$ and for all values of $\alpha$, the subgame perfect equilibrium strategy $r^*\(\alpha\)$ of the supplier is given by $r^*\(\alpha\)=\frac12\(\alpha-3T-c\)^+$.
\end{corollary}
\Cref{prop3.4} implies that if $\alpha<3T+c$, then the optimal profit of the supplier is equal to $0$, i.e., he will set a price equal to his cost. Actually, he is indifferent between any price $w\ge c$ since in that case, he knows that the retailers will order no additional quantity. In sum, \Cref{prop3.3,prop3.4} provide the subgame perfect equilibrium of the two-stage game in the case of identical (i.e., $T_1=T_2=T$) retailers. 
\begin{theorem}\label{thm3.5} If the capacities of the retailers are identical, i.e., if $T_1=T_2=T$, then the complete information two-stage game has a unique subgame perfect Nash equilibrium, under which the supplier sells with profit margin $r^*\(\alpha\)=\frac12\(\alpha-3T-c\)^+$ and each of the retailers orders quantity $q^*\(w\)= \frac13\(\alpha-3T-w\)^+$ and produces (releases from his inventory) quantity $t^*\(w\)= T-\frac13\(3T-\alpha\)^+$.
\end{theorem}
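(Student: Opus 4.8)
The plan is to prove \Cref{thm3.5} by backward induction, assembling the two propositions already established for the symmetric case $T_1=T_2=T$. By definition a subgame perfect Nash equilibrium is a strategy profile inducing a Nash equilibrium in every subgame; since the only proper subgames are the second stage games indexed by the supplier's price $w$ (equivalently his margin $r$), together with the whole game, it suffices to (i) pin down the unique Nash equilibrium of the second stage for \emph{every} value of $w$, and (ii) optimize the supplier's induced payoff over his strategy set in the first stage.

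First I would invoke \Cref{prop3.3}, which already supplies, for every $w$ and every $\alpha$, the unique symmetric second stage equilibrium $s^*_i=(t^*_i,q^*_i)$ with $t^*_i(w)=T-\frac13(3T-\alpha)^+$ and $q^*_i(w)=\frac13(\alpha-3T-w)^+$. Because this holds for every $w$, it specifies the retailers' equilibrium behavior in every subgame, which is exactly what subgame perfection demands. Substituting the aggregate response $q^*(w)=q^*_1(w)+q^*_2(w)=\frac23(\alpha-3T-w)^+$ into \eqref{5} collapses the first stage to the maximization of the single variable function $u^s(r\mid\alpha)=\frac23 r(\alpha-3T-c-r)^+$ recorded in \eqref{pay}. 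Next I would invoke \Cref{prop3.4}, which identifies the unique maximizer of this function on the supplier's strategy set \eqref{4} as $r^*(\alpha)=\frac12(\alpha-3T-c)^+$. Concatenating the supplier's optimal margin with the retailers' best replies then yields precisely the profile in the statement, and combining the two uniqueness claims gives uniqueness of the whole profile.

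The step I expect to require the most care is the uniqueness claim in the degenerate no trade region $\alpha\le 3T+c$. There, for every admissible $r\ge 0$ one has $w=r+c\ge \alpha-3T$, so by \Cref{prop3.3} the retailers order $q^*(w)=0$ and the payoff \eqref{pay} is identically zero; hence the supplier is indifferent among all prices $w\ge c$, as noted after \Cref{prop3.4}. Consequently uniqueness of the supplier's \emph{strategy} must be understood via the tie breaking convention $r^*(\alpha)=0$ implicit in the formula $\frac12(\alpha-3T-c)^+$, while the equilibrium \emph{outcome}---no order placed, each retailer releasing the quantity $t^*_i=T-\frac13(3T-\alpha)^+$, and zero supplier profit---is genuinely unique. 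In the complementary region $\alpha>3T+c$ the induced payoff $\frac23 r(\alpha-3T-c-r)$ is strictly concave in $r$ on $[0,\alpha-3T-c)$ with a strict interior maximizer at the midpoint, so uniqueness there is unambiguous and the two propositions combine without further work.
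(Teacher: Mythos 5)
Your proposal is correct and matches the paper's own treatment: \Cref{thm3.5} is presented there as a direct consequence of combining \Cref{prop3.3} (second stage) with \Cref{prop3.4} (first stage) via backward induction, with no separate proof given. Your additional care about the supplier's indifference when $\alpha\le 3T+c$ mirrors the paper's own remark following \Cref{prop3.4} and correctly identifies the only point where ``uniqueness'' requires the tie-breaking convention built into the formula $\frac12\(\alpha-3T-c\)^+$.
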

\subsection{Mitigating marginal cost effects: increased social welfare}
Using the notation of \Cref{prop3.3}, $Q^*_i=t^*_i+q^*_i$ is the total quantity of the good that each retailer releases to the market in equilibrium. If $\alpha \le 3T$, then $Q^*_i$ is equal to the equilibrium quantity of a classic Cournot duopolist who faces linear inverse demand with intercept equal to $\alpha$ and has $0$ cost per unit. If $\alpha>3T$, then the equilibrium quantities of the retailers depend on the supplier's price $w$. If $w \ge\alpha-3T$, the retailers will avoid ordering and will release their inventories to the market. Contrarily, if $w$ is low enough, i.e., if $w<\alpha-3T$, they will be willing to order additional quantities from the supplier. In this case, $Q^*_i=t^*_i+q^*_i=T+\frac13\(\alpha-3T-w\)=\frac13\(\alpha-w\)$. This quantity is equal to the equilibrium quantity of a Cournot duopolist who faces linear demand with intercept $\alpha$ and cost per unit $w$ for all product units, despite the fact that here, the retailers face a cost of $0$ for the first $T$ units and $w$ for the rest. Hence, in this case and as a result of marginal cost analysis, the market behaves \emph{as if} there were no inventories, and the most expensive units are the ones that determine the total quantity sold to the market. However, if the market coordinates on the unique subgame perfect equilibrium, then as determined in \Cref{thm3.5}, the supplier's optimal price, $w^*\(\alpha\)=r^*\(\alpha\)+c=\frac12\(\alpha-3T-c\)^++c$, depends negatively on $T$, i.e., it decreases as $T$ increases.\par
Based on these observations, the retailers' low-cost in-house production (or low-cost inventory) affects in different ways the total quantity that is released to the market and thus, it has both a direct and an indirect impact on the consumers' surplus\footnote{We remind that the consumers' surplus is proportional to the square of the total quantity that is released to the market.}: for lower values of the demand parameter, the consumers benefit from the retailers' low-cost inventories or production capacities since otherwise no goods would have been released to the market. For higher values of $\alpha$ the total quantity that is released to the market depends indirectly on $T$ -- via a lower wholesale price set by the supplier -- and thus the benefits from the retailers' integrated low-cost production channel are again experienced by the consumers.

\section{Subgame-perfect equilibrium under demand uncertainty}\label{sec4}
We now study the equilibrium behavior of the supply chain, assuming that the supplier has incomplete information about the true value of the demand parameter $\alpha$ when he sets his price, while the retailers know it when they place their orders (if any). In the two-stage game context, we assume that the demand is realized after the first stage (i.e., after the supplier sets his price) but prior to the second stage of the game (i.e., prior to the decision of the retailers about the quantity they will release to the market). The case $T_1\ge T_2$ exhibits significant computational difficulties and we will restrict our attention to the symmetric case $T_1=T_2=T$. \par
Under these assumptions, the equilibrium analysis of the second stage (as presented in \cref{sub3.1}) remains unaffected: the retailers observe the actual demand parameter $\alpha$ and the price $w$ set by the supplier and choose the quantities $t_i$ and $q_i$ for $i=1,2$. However, in the first stage, the actual payoff of the supplier depends on the unknown parameter $\alpha$ for which he has a belief: the distribution $F$ of $\alpha$, which induces a non-atomic measure on $[0, \infty)$ with finite expectation, as assumed. Given the value of $\alpha$ and assuming that the retailers respond to the supplier's choice $w$ with their unique equilibrium strategies, the supplier's actual payoff is provided by equation \eqref{pay}. Taking the expectation with respect to his belief (see also \eqref{6}) implies that when $\alpha$ is unknown and distributed according to $F$, the payoff function of the supplier is equal to 
\begin{equation}\label{8} \us=\frac23r \,\ex\(\alpha-3T-c-r\)^{+} \;\; \mbox{for $r \ge 0$}. \end{equation}
Let $r_H:=\alpha_H-3T-c$ and $r_L:=\alpha_L-3T-c$. If $r_H \leq 0$, then $\alpha \leq 3T+c $ for all $\alpha$, i.e., then, $\us\equiv 0$ and the problem is trivial. Hence, to proceed, we assume that $r_H>0 \Leftrightarrow 3T+c<\alpha_H$. Then, since $\us =0$ for $r\ge r_H$, we may restrict the domain of $\us$ and take it to be the interval $[0,r_H)$ in \eqref{8}. To proceed, we observe that $\us$ may be expressed in terms of the Mean Residual Lifetime (MRL) function, which is defined as, see e.g., \cite{Sh07} or \cite{Be16},
\begin{equation}\label{imrl} \m\(t\):=\begin{cases}\,\ex\(\alpha-t \mid \alpha >t\)=\dfrac{\int_{t}^{\infty}\(1-F\(x\)\)dx}{1-F\(t\)} & \mbox{if $P\(\alpha >t\)>0$} \\
\,0 & \mbox{otherwise} \end{cases}
\end{equation}
The term MRL stems from the reliability and actuarial literature. In the present setting, the MRL function denotes the expected additional demand. Despite being a powerful and well-studied tool to deal with uncertainty, the MRL function has only scarce and informal applications in the revenue management literature or economics in general. Using this representation, one may get the following\footnote{It should be stressed that obtaining \eqref{der} is not straightforward at all, because the product rule of differentiation does not apply. In particular, see \Cref{lem diffa} and the proof of \Cref{lem diff}, all in the Appendix.}
\begin{lemma}\label{lem diff}
For $r \in \(0,r_H\)$, the supplier's payoff function and its derivative are expressed through the MRL function by $\us=\frac23r\m\(r+3T+c\)\(1-F\(r+3T+c\)\)$ and \begin{equation}\label{der}\fdr=\frac23\(\m\(r+3T+c\)-r\)\(1-F\(r+3T+c\)\),\end{equation} respectively. In addition, the roots of $\fdr$ in $\(0,r_H\)$ (if any) satisfy the fixed point equation $r^*=\m\(r^*+3T+c\)$.
\end{lemma}
Based on the first order conditions stated in \Cref{lem diff}, we are now able to derive necessary and sufficient conditions for the existence and uniqueness of a Bayes-Nash equilibrium. 
\begin{theorem}[Necessary and sufficient conditions for Bayesian Nash equilibria]\label{thm4.1} 
Under incomplete information with identical retailers (i.e., for $T_1=T_2=T$) for the non-trivial case $r_H > 0$ and assuming the supplier's belief induces a non-atomic measure on the demand parameter space:
\begin{alphlist}[(a)]
\item{\bf (necessary condition)} If the optimal profit margin $r^*$ of the supplier exists when the retailers follow their equilibrium strategies in the second stage, then it satisfies the fixed point equation
\begin{equation}\label{14}r^*=\m\(r^*+3T+c\)\end{equation}
\item{\bf (sufficient condition)} If the mean residual lifetime $\m\(\cdot\)$ of the demand parameter $\alpha$ is decreasing, then the optimal profit margin $r^*$ of the supplier exists under equilibrium, and it is the unique solution of the equation $r^*=\m\(r^*+3T+c\)$. In that case, if $\ex\(\alpha\) - \alpha_L \leq \alpha_L -3T-c \ \(=r_L\)$, then $r^*$ is given explicitly by $r^*=\frac12\(\ex\(\alpha\)-3T-c\)$. Otherwise $r^* \in \(r_L^+, r_H\)$.
\end{alphlist}
\end{theorem}
\Cref{prop3.3} and \Cref{thm4.1} lead to
\begin{corollary}\label{cor4.2} If the capacities of the producers (retailers) are identical, i.e., if $T_1=T_2=T$, and if the distribution $F$ of the demand intercept $\alpha$ is of decreasing mean residual lifetime $\m\(\cdot\)$, then the incomplete information two-stage game has a unique subgame perfect Bayesian Nash equilibrium for the non-trivial case $r_H > 0$. At equilibrium, the supplier sells with profit margin $r^*$, which is the unique solution of the fixed point equation $r^*=\m\(r^*+3T+c\)$, and each of the producers (retailers) orders quantity $q^*\(w\)= \frac13\(\alpha-3T-w\)^+$ and produces (releases from his inventory) quantity $t^*\(w\)= T-\frac13\(3T-\alpha\)^+$.
\end{corollary}
\noindent By the DMRL property, \Cref{thm4.1} implies that $r^*\le \ex\(\alpha\)$, since
\begin{equation}\label{16}r^*=\m\(r^*+3T+c\)\le \m\(u\)\le \m\(0\)=\ex\(\alpha\),\end{equation}
for all $u$ such that $0\le u \le r^*+3T+c$. Finally, if $T=0$, then our model corresponds to a classic Cournot duopoly at which the retailers' cost equals the price that is set by a single revenue-maximizing supplier. As shown in our related study, \cite{Le17}, in this case, the milder condition of \emph{decreasing generalized mean residual life (GMRL)} is sufficient to yield existence and uniqueness of Bayes-Nash equilibrium.

\section{Why the supplier may charge a high price}\label{secinef}
Utilizing the characterization of the supplier's optimal price in \Cref{thm4.1}, we compare the complete and incomplete information equilibrium outcomes. It is well known that markets with incomplete information may be inefficient at equilibrium in that trades that would be beneficial for all players may not occur under Bayesian Nash equilibrium, e.g., see \cite{My83}. Namely, under equilibrium, there exist values of $\alpha$ for which a transaction would have occurred in the complete information case but not in the incomplete information case. In such a case, the market will experience a \emph{stockout} due to lack of coordination between the supplier and the retailers. We call this phenomenon, an \emph{incoordination stockout}. 

\subsection{Measuring market inefficiency}
For the incomplete information case and for a particular distribution $F$ of $\alpha$, let $U$ be the event that a transaction would have occurred under equilibrium if we had been in the complete information case, and let $V$ be the event that a transaction does not occur in the incomplete information case under equilibrium. Our aim is to measure the inefficiency of our market by studying $P\(V \cap U\)$ and $P\(V \mid U\)$. By \Cref{prop3.3}, a transaction will take place under equilibrium if and only if
\begin{equation}\label{ntr} \alpha > r^* +3T +c\end{equation}
where $r^*$ stands for the profit margin of the supplier in the incomplete information case under equilibrium. Using (\ref{ntr}) and \Cref{prop3.4}, a necessary and sufficient condition for a transaction to take place under equilibrium in the complete information case is $\alpha > 3T +c$.  Using \Cref{thm4.1} and the steps at the end of its proof, it is straightforward to check that if $\ex\(\alpha\) - \alpha_L \leq r_L$, then \eqref{ntr} is satisfied for all $\alpha > \alpha_L$  since the ordering is $3T +c < r^* +3T +c \leq \alpha_L$. Hence $V \cap U=\emptyset$. So let us assume that $\ex\(\alpha\) - \alpha_L > r_L$. We then have

\begin{lemma}\label{lemprob}
For a given distribution $F$ of $\alpha$ with the DMRL property, let $V$ be the event that a transaction does not occur in the incomplete information case under equilibrium and let $U$ be the event that a transaction would have occurred under equilibrium if we had been in the complete information case. Then
\[P\(V \mid U\) = \frac{F\(r^* +3T +c\) - F\(3T +c\)}{1-F\(3T +c\)}\]
In particular, if \, $ \ex\(\alpha\) - \alpha_L \leq \alpha_L -3T-c \ \(=r_L\)$, then $P\(V \cap U\) = P\(V \mid U\) = 0$. 
\end{lemma}
As in the case when $T=0$ (see \cite{Le17}) the next Theorem shows that $P\(V\mid U\)$ admits a bound which is independent of $F$.

\begin{theorem}\label{thmbound}
For any distribution $F$ of $\alpha$ with the DMRL property, the probabilitiy of an incoordination stockout, i.e. the conditional probability $P\(V \mid U\)$ that a transaction does not occur under equilibrium in the incomplete information case, given that a transaction would have occurred under equilibrium if we had been in the complete information case, cannot exceed the bound $1-e^{-1}$, i.e.,
\begin{equation}\label{bound}P\(V\mid U\)\le 1-e^{-1}\end{equation}
This bound is tight over all DMRL distributions, because it is attained by the exponential distribution.
\end{theorem}

\subsection{Risk of an incoordination stockout}\label{risk}
To gain more intuition about the reasons that make the supplier charge a price that runs a (sometimes) considerable risk of no transaction, although such a transaction would be beneficial for all market participants, recall the discussion preceding Lemma \ref{lemprob}. To have $P\(V \mid U\) = 0$, the expectation restriction, $\ex(a) \leq \alpha_L + r_L$, must apply. So, assuming this restriction applies and keeping everything else the same (i.e., $\alpha_L$, $T$ and $c$), start moving probability mass to the right. Then, $\ex \(\alpha\)$ will be increasing which results in the supplier's optimal profit margin, $r^* = \frac{1}{2} \(\ex\(\alpha\) -3T -c\)$, increasing. Thus, there is a threshold for $\ex\(\alpha\)$, namely $\alpha_L + r_L$ (when the supplier charges $r^* = r_L$), above which the supplier is willing to charge a price so high (i.e., above $r_L$) that he will run the risk of receiving no orders. Notice, that this discussion implies neither monotonicity of the supplier's payoff nor monotonicity of the probability of no transaction. \par
Next, assume that the expectation restriction applies and keeping $T$ and $c$ the same, start moving $\alpha_L$ to the left. Now, $\ex \(\alpha\)$ will be decreasing, which results in the supplier's optimal profit margin, $r^*$, decreasing. However, since $r_L$ will eventually become non-positive while $\ex\(\alpha\) - \alpha_L$ always stays positive, it is easy to see that again, the same threshold applies for the expectation condition, i.e., eventually we will get $\ex\(\alpha\) = \alpha_L + r_L$, the supplier will charge $r^* = r_L$ there, and for lower values of $\alpha_L$ inefficiencies will appear. In other words, for values of $\alpha_L$ below the threshold, although he is reducing his profit margin, the supplier is asking for a {\em relatively} high price (i.e., above $r_L$) and is willing to take the risk of no transactions. Finally, using similar arguments, we can see that if we either increase the inventory level $T$ of the retailers or the cost $c$ of the supplier, all else being kept the same (see also \Cref{inventory,cost}), the same threshold applies for the appearance of inefficiencies, determined by $\ex\(\alpha\)=\alpha_L+r_L$.

\subsection{Numerical simulations}
In the simulations presented below, we examine the probability of no transaction, as determined in \Cref{lemprob} and the supplier's expected payoff against the expected demand (mean of demand parameter $\alpha$). First, we simulate the uniform distribution $\alpha\sim U[\alpha_L,\alpha_H]$, i.e., $f\(\alpha\)=\frac{1}{\alpha_H-\alpha_L}\cdot\mathbf{1}_{\alpha \in [\alpha_L,\alpha_H]}$ with $\alpha_L=100$ and $\alpha_H=100+5\cdot j$ for $j=1,\dots,100$, cost $c=1$ and inventories $T=20$. The results are shown in \Cref{uniprb} below. The $x-$axis represents the mean $\ex \alpha$ of the respective distribution of $\alpha$. Observe that the probability of no transaction becomes positive for $\ex \alpha >139 \(=\alpha_L+r_L\)$. The supplier's expected payoff is strictly positive for all $j\ge 1$ and moreover, it increases despite an increasing probability of no transaction. 
\begin{figure}[!htp]
\centering
\includegraphics[width=0.82\linewidth]{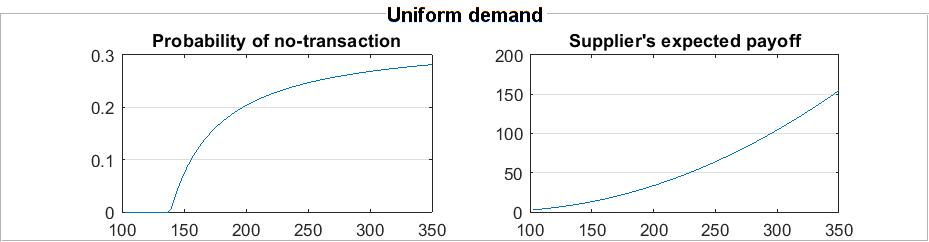}
\caption{No transaction probability and expected supplier's payoff as the expected demand increases.}
\label{uniprb}
\end{figure}
The same behavior is exhibited by the (shifted) exponential distribution. In \Cref{expprb}, we see the simulation results for $\alpha \sim 100+\exp\(\lambda\)$, i.e., $f\(\alpha\)=\frac1{\lambda} e^{-\frac{1}{\lambda} \(x-100\)}$ for $x\ge 100$, with $\lambda=10\cdot j$ for $j=1,\dots 50$. As above, $T=20$ and $c=1$ which implies that the threshold for the appearance of inefficiencies, remains the same, i.e., $\ex \alpha>139 \(=\alpha_L+r_L\)$. In the left figure, we observe that the probability of no transaction attains the upper bound that is determined in \Cref{thmbound}. Having a constant MRL, the shifted-exponential distribution is DMRL, which shows that this bound is tight over the class of DMRL distributions. Again, the supplier's expected payoff -- right figure -- is strictly positive for all $j\ge1$ and increasing. 
\begin{figure}[!htp]
\centering
\includegraphics[width=0.82\linewidth]{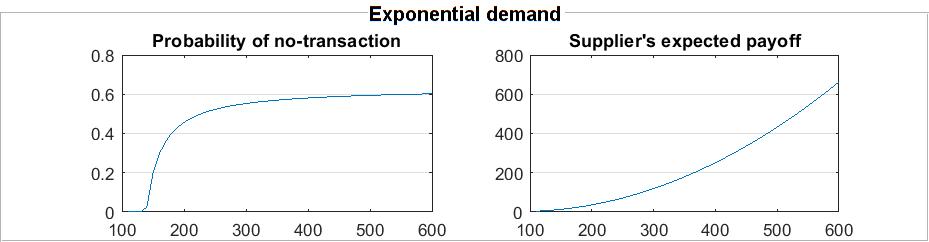}
\caption{Exponential distribution: $\alpha\sim 100+\exp\(\lambda\)$.}
\label{expprb}
\end{figure}

\subsection{Lower expected demand but higher wholesale price}
The next simulation, shows that a higher expected demand does not necessarily imply a higher wholesale price. To see this, we compare two gamma distributions $\alpha_1$ and $\alpha_2$ for the market demand, with parameters $\alpha_1\sim 100+\Gamma\(10,2\)$ and $\alpha_2 \sim 100+\Gamma\(2, 9\)$. We say that $\alpha \sim \Gamma\(k,\theta\)$ if $\alpha$ has probability density function $f\(r\mid k,\theta\)=\frac{1}{\Gamma\(k\)\theta^k}r^{k-1}e^{-\frac{r}{\theta}}$. Here, $T=33$ and $c=1$.
\begin{figure}[!htp]
\centering
\includegraphics[width=0.8\linewidth]{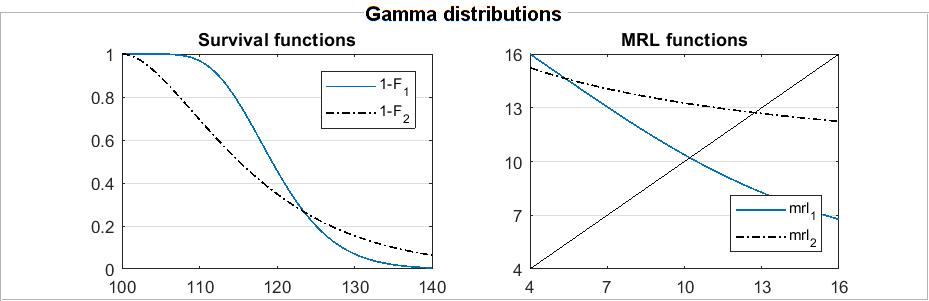}
\caption{Gamma distributions: $\alpha_1\sim \Gamma\(10,2\)$ and $\alpha_2\sim \Gamma\(2,9\)$.}
\label{gammas}
\end{figure}
The figure on the left, shows that $\alpha_1$ and $\alpha_1$ are incomparable in the usual stochastic order. The figure on the right shows the MRL functions $\m_1\(r\)$ of $\alpha_1$, $\m_2\(r\)$ of $\alpha_2$ and the diagonal $y=r$. The intersection points are precisely the optimal wholesale prices for each distribution. Here $r^*_1=10.21$ and $r^*_2=12.73$, despite the fact that $\ex \alpha_1=20>18=\ex \alpha_2$. The supplier's expected payoff is higher in the first market than in the second, since $u^s(r^*_1)=100.5>95.1=u^s(r^*_2)$. \par
The observation that larger markets (in terms of expectation) do not necessarily give rise to higher prices confirms the intuition of \cite{La01} that "size is not everything" and that price movements are driven by different forces. In our case the explanation is provided by \eqref{14}: to obtain a higher price one needs to compare two markets in terms of the mrl-order (see \cite{Sh07}) and not in terms of their means or of the usual stochastic order. Here, $\alpha_2$ \emph{eventually} dominates $\alpha_1$ in terms of their mrl functions and hence the market that is described by $\alpha_2$ results in a higher wholesale price. 


\section{Effect of cost and inventory size on the supplier's profit margin and pricing policy}\label{seccost}
The characterization result of \Cref{thm4.1} facilitates the comparative statics analysis on the parameters that determine the supplier's pricing policy. Under the DMRL assumption, if everything else stays the same but retailers' inventories are rising (but staying  below $(\alpha_H -c)/3$ because of the non-triviality assumption), the supplier will strictly decrease or keep constant the price he charges at equilibrium, because his profit margin $r^*$ will be non-increasing. The reason is that as $T$ increases, the graph of $\m\(\,\cdot \, , T, c\):=\m\(\,\cdot\,+3T+c\)$ shifts to the left, and therefore, its intercept with the line bisecting the first quadrant decreases\footnote{To avoid confusion, we use the terms ``decreases'' in the sense ``decreases non-strictly'' or ``does not increase'', as we did before.}. By \Cref{thm4.1}, this intercept is $r^*$, and hence the price $w^*=c+r^*$ the supplier asks at equilibrium will be decreasing. Hence,
\begin{corollary}\label{inventory}
Under the DMRL property, if everything else stays the same and the retailers' inventory capacity $T$ increases in the interval $\left[0, \(\alpha_H -c\)/3\)$, then at equilibrium, the supplier's profit margin $r^*$, and hence the wholesale price $w^*=r^*+c$, both decrease.
\end{corollary}
The result of \Cref{inventory} is illustrated in \Cref{expnv} for exponentially distributed demand with mean $\lambda=10$. The supplier's cost is constant and equal to $c=1$, and the retailers' inventory is equal to $T=j$, with $j$ taking values in $j=1,2,\dots, 30$. 
\begin{figure}[!ht]
\centering
\includegraphics[width=0.8\linewidth]{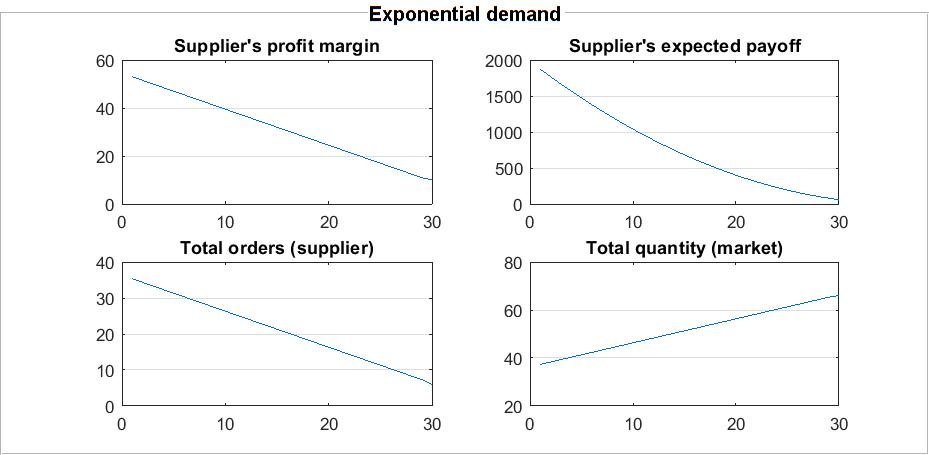}
\caption{Basic model quantities as retailer's inventory increases.}
\label{expnv}
\end{figure}
\par
By the same argument, if we take the supplier's cost $c$ to be increasing (but staying below $\alpha_H -3T$) while everything else stays fixed, then the supplier's profit margin $r^*$ will again be decreasing. However, this time, the price $w^*=c+r^*$ the supplier asks at equilibrium will be increasing. To see this, let $c_1 < c_2$. Then, since $\m\(\cdot\)$ is decreasing, $r_2^* \leq r_1^*$. If $r_2^* = r_1^*$, then $w_1^* = r_1^* + c_1 < r_2^* + c_2 = w_2^*$. If $r_2^* < r_1^*$, by \Cref{thm4.1}, $\m(r_2^* + 3T + c_2) < \m(r_1^* + 3T + c_1)$. The DMRL property then implies that $r_1^* + c_1 \leq r_2^* + c_2$, i.e., $w_2^* \geq w_1^*$. So, in this case, we have
\begin{corollary}\label{cost}
Under the DMRL property, if everything else stays the same and the supplier's cost $c$ increases in the interval $[0, \alpha_H - 3T)$, then at equilibrium, the supplier's profit margin $r^*$ decreases while the wholesale price $w^*=r^*+c$ increases.
\end{corollary}
The result of \Cref{cost} is presented graphically below. Let $\alpha\sim 100+100\cdot Beta\(1,2\)$, i.e., the demand parameter $\alpha$ follows a scaled Beta distribution on the interval $\left[\alpha_L,\alpha_H\right]=\left[100,200\right]$ and let $T=40$. The $x-$axis corresponds to values for the cost parameter $c$ in the range $c=1,2,\dots,30$. 
\begin{figure}[!ht]
\centering
\includegraphics[width=0.8\linewidth]{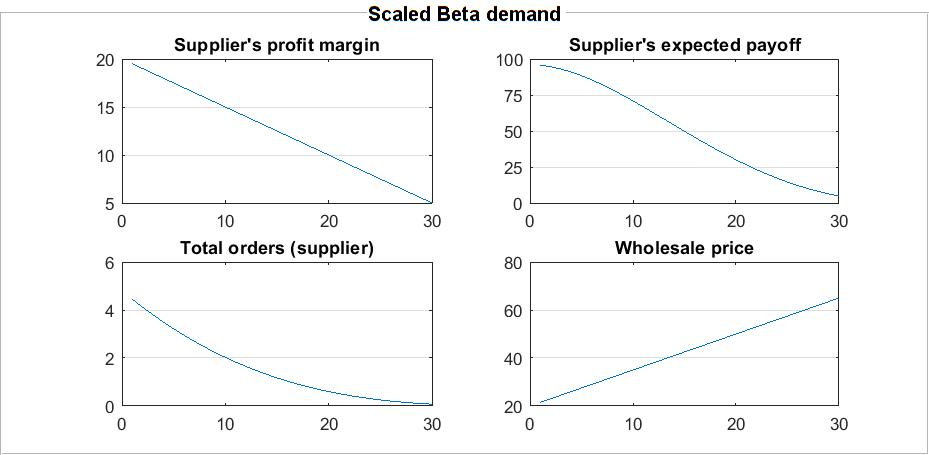}
\caption{Basic model quantities as supplier's cost increases (scaled Beta demand)}
\label{scaledbeta}
\end{figure}
As predicted by \Cref{cost}, the wholesale price that the supplier charges increases, whereas his profit margin decreases. The total order quantity (quantity ordered by both retailers) decreases and since the inventory parameter $T$ is kept constant, this results in a decrease in consumers' surplus. A similar behavior of these quantities is observed when $\alpha\sim 100+\exp\(\lambda=10\)$, i.e., when $\alpha$ follows a shifted exponential distribution with mean $\lambda=10$ and $T=40$. For completeness, the results of the simulation are shown in \Cref{expcst} below
\begin{figure}[!ht]
\centering
\includegraphics[width=0.8\linewidth]{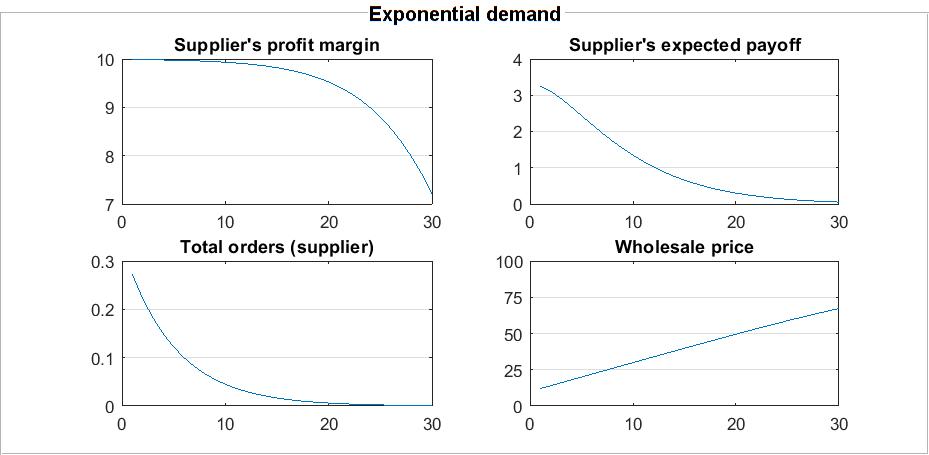}
\caption{Basic model quantities as supplier's cost increases (exponential demand)}
\label{expcst}
\end{figure}
Further simulations with uniformly and Pareto distributed demand -- which does not satisfy the DMRL property -- and for a wide range of their parameters essentially highlight the same qualitative behavior for these quantities (wholesale price, profit margin, supplier's payoff and total order quantities) and thus, are omitted. 

\section{Number of second-stage retailers and supplier's profits}\label{sec7}
\Cref{thm4.1}, the main result of \Cref{sec4}, admits a straightforward extension to the case of $n>2$ identical retailers, i.e., $n$ retailers each having capacity constraint $T_i=T$. Formally, let $N=\{1,2, \ldots, n\}$, with $n\ge 2$ and denote with $R_i$ retailer $i$, for $i\in N$. As in \Cref{secmodel}, a strategy profile is denoted with $s=\(s_1, s_2, \ldots, s_n\)$. The payoff function of $R_i$ depends on the total quantity of the remaining $n-1$ retailers and is given by \eqref{3}, where now $Q$ denotes the total quantity sold by all $n$ retailers, i.e., 
\begin{equation*}Q=\sum_{j=1}^nQ_j=\sum_{j=1}^n (t_j+q_j)
\end{equation*} 
Following common notation, let $s=\(s_{-i},s_i\)$ and $Q_{-i}=Q-Q_i$ for $i \in N$. It is immediately evident that \Cref{lem3.1} and \Cref{lem3.2} still apply, if one replaces $Q_j$ with $Q_{-i}$ and $s_j$ with $s_{-i}$. Hence, one may generalize Proposition \ref{prop3.3} as follows

\begin{proposition}\label{prop7.1}
If $T_i=T$ for $i \in N$, then for all values of $\alpha$ the strategies $s^*_i\(w\)=\(t^*_i\(w\), q^*_i\(w\)\)$, or shortly $s^*_i=\(t^*_i, q^*_i\)$, with $t^*_i(w)= T-\frac{1}{n+1}\(\(n+1\)T-\alpha\)^+$, and $q^*_i\(w\)= \frac{1}{n+1}\(\alpha-\(n+1\)T-w\)^+$, are second-stage equilibrium strategies among the retailers $R_i$, for $i \in N$.
\end{proposition}
Turning attention to the first stage, the payoff function of the supplier in the complete information case (cf. \Cref{sub3.2}) will be given by $\us=rq^*\(w\)=\frac{n}{n+1}r\(\alpha-\(n+1\)T-c-r\)^+$, and hence, it is maximized at $r^*\(\alpha\)=\frac12\(\alpha-\(n+1\)T-c\)^+$, which generalizes Proposition \ref{prop3.4}. Similarly, if the supplier knows only the distribution and not the true value of $\alpha$, the arguments of \Cref{sec4} still apply. Then the payoff function of the supplier - cf. \eqref{8} - becomes $\us=\frac{n}{n+1}r \,\ex\(\alpha-\(n+1\)T-c-r\)^{+}$, for $r \ge 0$ and hence 
\begin{equation*}\us=\frac{n}{n+1}r \,\m\(r+\(n+1\)T+c\)\(1-F\(r+\(n+1\)T+c\)\), \end{equation*}
for $0\le r<\alpha_H-\(n+1\)T-c$. Proceeding as in \Cref{sub3.2}, we can generalize \Cref{thm4.1} to the case of $n\ge 2$ identical retailers. To this end, let $r_L^{\,n}:=\alpha_L-\(n+1\)T-c$ and $r_H^{\,n}:=\alpha_H-\(n+1\)T-c$. Then,
\begin{theorem}[Necessary and sufficient conditions for Bayesian Nash equilibria]\label{thmnretailers}
Under incomplete information, for identical retailers (i.e., for $T_i=T,\,i \in N$) for the non-trivial case $r_H^{\,n} >0$ and assuming the supplier's belief induces a non-atomic measure on the demand parameter space:
\begin{alphlist}[(a)]
\item{\bf (necessary condition)} If the optimal profit margin $r^*$ of the supplier exists when the retailers follow their equilibrium strategies in the second stage, then it satisfies the fixed point equation \[r^*=\m\(r^*+\(n+1\)T+c\)\]
\item {\bf (sufficient condition)} If the mean residual lifetime $\m\(\cdot\)$ of the demand parameter $\alpha$ is decreasing, then the optimal profit margin $r^*$ of the supplier exists under equilibrium, and it is the unique solution of the equation $r^*=\m\(r^*+\(n+1\)T+c\)$. In that case, if $\ex\(\alpha\) - \alpha_L \leq \alpha_L -\(n+1\)T-c\ \(=r_L^{\,n}\)$, then $r^*$ is given explicitly by $r^*=\frac12\(\ex\(\alpha\)-\(n+1\)T-c\)$. Otherwise, $r^* \in \(\(r_L^{\,n}\)^+, r_H^{\,n}\)$.
\end{alphlist}
\end{theorem}
\subsection{Increased competition may benefit the supplier}
Based on \Cref{thmnretailers}, we study how the number of second-stage retailers affects supplier's profit and consumers' surplus, the latter as expressed by the total quantity that is sold in the market. In \Cref{nequil}, we present the results from simulating the distribution of $\alpha$ as the uniform distribution on the interval $\left[\alpha_L,\alpha_H\right]=\left[100,300\right]$, with $c=1$ and $T=5$ ($T$ represents the quantity produced by each of the $n$ identical retailers).
\begin{figure}[!ht]
\centering
\includegraphics[width=0.8\linewidth]{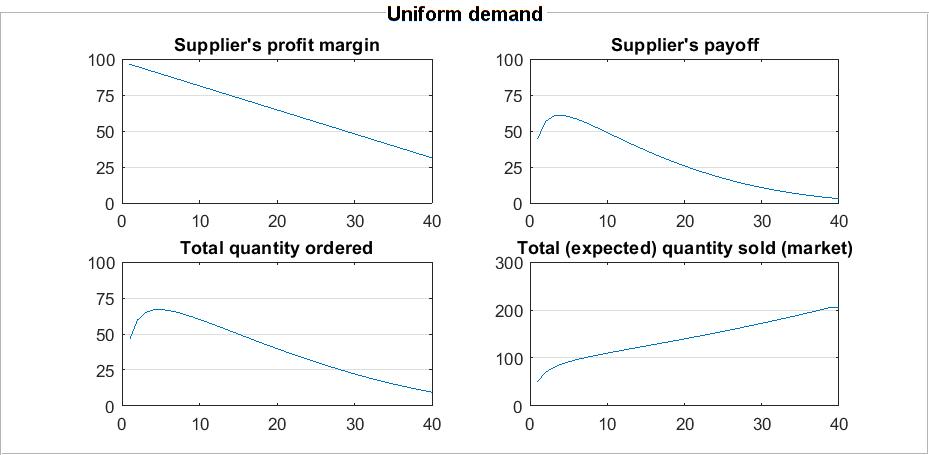}
\caption{Basic model quantities for increasing number of second-stage retailers.}
\label{nequil}
\end{figure}
For values of $n\le 5$, the supplier benefits from increasing second-stage competition both in terms of quantity ordered and in terms of his overall payoff. However, as the number of retailers further increases -- and hence the total quantity produced (or held as inventory) also increases -- the supplier receives less total orders and his payoff declines. On the other hand, the total quantity that is released to the market increases as the number of retailers increases. Under our assumptions, each additional retailer has access to a constant amount of $T$ low-cost in-house produced units. This leads to a lower profit margin for the supplier and a lower overall cost for the retailers that, in turn, leads to the observed increase of the total quantity that is sold to the consumers. Hence, in the present setting, increased second-stage competition positively contributes to the total consumer surplus.\par
The behavior exhibited in \Cref{nequil} seems typical -- based on several simulations that are omitted for brevity -- for a wide range of parameters and distributions that may (Beta, exponential) or may not (Generalized Pareto) satisfy the DMRL condition. 

\section{Discussion \& future work}\label{discussion}

From a game-theoretic perspective, the current model aims to capture and study the complex cost structure of contemporary Cournot oligopolists through a mathematical model that is realistic, but also tractable. The derivation of the market equilibrium under different information levels and the analytical representation of the equilibrium strategies of the interacting entities, supplier and retailers, provide the proper framework for an extensive sensitivity analysis of the model parameters. \par
From an economic perspective, the model assumptions and the setup that is employed aim to reflect situations from the real economic practice and thus, to have an impact on policy development and implementation by active economic stakeholders. To this end, we further elaborate on the model assumptions and discuss directions for further research.

\textbf{Single supplier:} The monopoly setting that we employ in the present paper allows for a transparent study on the pricing decisions of a firm. Similar models, with a single supplier and multiple retailers have been extensively studied in the relevant literature, see e.g., \cite{Ty99,Be05,Ya06} and \cite{Wu12} among others. The absence of upstream competition in these models is a simplifying factor that concentrates the analysis on the impact of the supplier's decisions to the retail market. Our study considers the additional assumptions of in-house production capacities and demand uncertainty and aims to contribute in this stream of literature.\par
The study of such models is also theoretically justified. While one may think, monopolies are rare in the contemporary congested economies, a more detailed reflection suggests the opposite. For instance, oligopolistic firms differentiate their products in such a degree that they monopolize specific market niches. Moreover, brand loyalty, increasing market protection, government policies, diversification and technology integration are only some of the prevalent economic factors that still lead to the formation of monopolies. Hence, monopoly pricing remains relevant and continues to attract research attention, see e.g., \cite{Ch04}.\par
Despite these arguments, it is self-evident from an economic perspective that upstream competition should be considered as an extension of the present model in future studies. This research direction will primarily investigate the suppliers' strategies to avoid straightforward price competition and hence, the trivial Bertrand equilibrium according to which they will sell at their marginal cost. Such an approach involves the study of product differentiation, favourable buy-back conditions and other more involved research \& development (R\&D) and marketing policies and as a result shifts the point of interest from pricing to strategic planning. The insights obtained by the present work can serve as a benchmark for comparison in such a study. \vspace{0.3cm}

\textbf{Uniform pricing:} We study a setting in which the supplier has ample quantities to cover any possible demand by the retailers (domestic manufacturers). The retailers only refer to the supplier if they exhaust their own production capacities and order quantities that are not comparable in size to the supplier's capacity (big international manufacturer). This model aims to capture the prevalent economic practice, where domestic manufacturers hedge their risks that are associated with demand fluctuations by reverting to the international \emph{spot} markets of the same product in cases of a stockout.\par
Yet, price discrimination is an interesting and direct extension of the present study. Price differentiation has been thoroughly studied as a mechanism to increase supply chain coordination and profits and recent studies corroborate the position that price differentiation can benefit the social welfare as well, see e.g., \cite{Wu18}. \par
However, there are certain arguments that still motivate the study of the uniform-price setting as employed in this paper. In addition to the fact, that price discrimination is prohibited by law in certain markets (see e.g., Robinson -- Patman Act), there are structural properties of contemporary markets that make the study of supply chains without price differentiation interesting, see e.g. \cite{Hu13}. Two notable instances are internet platforms and internationally operating suppliers as mentioned above. Manufacturers from domestic markets turn to such markets to procure additional quantities in the case that the short-term demand exceeds their own production capacities. This is done in an ad-hoc fashion (spot market) without significant prior communication between the manufacturer and the global (or internet operating) supplier and in particular, without the opportunity to develop a more elaborate transaction scheme (contract) between them. In essence, the globally operating supplier may ignore several characteristics of the market that he is procuring, such as the level of demand or the number of domestic retailers/manufacturers that operate in this local market. Finally, it is worth mentioning that recent advances provide evidence that uniform pricing is more profitable for the supplier than price discrimination for particular market structures, see \cite{Ma18}. \vspace{0.3cm}

\textbf{Price-only contracts:} Price-only contracts are still studied in the literature for two interconnected reasons: the first is their simplicity (in terms of implementation) and the second their relevance to the real economic practice, see e.g., \cite{La01} or \cite{Pe07}. More complex contracts often are costly or hard to implement and hence, the classic price-only contract still prevails in many vertical markets. However, there is also a third reason that makes the study of price-only contracts interesting in the particular setting of demand uncertainty. The supplier and the retailers may well develop more complex contracts that indeed mitigate demand uncertainty and supply chain risks. Yet, under the majority of the approaches that have been proposed and studied in the relevant literature (such as dynamic pricing, buy-back contracts etc.), a certain degree of uncertainty remains unaccounted for. This \emph{remaining uncertainty} cannot be dealt with and is precisely captured by the price-only contract. In this direction, \cite{Li17} show that uncertainty reduction may even be undesirable (harmful) for the involved parties and argue in favor of the use of price-only contracts. \vspace{0.3cm}

\textbf{Commitment on prices:} Invariably, in most settings, strategic decisions concerning production capacities are more binding than decisions concerning prices. This explains the extensive literature on models with capacity or quantity (instead of price) pre-commitment. However, while customary and intuitive to assume flexible prices, contemporary research suggests that the opposite may be true as well, see e.g., \cite{Kl10,Go11,Na13} (and references cited therein). In fact, \emph{price rigidity} or equivalently \emph{price stickiness} is an economic phenomenon that is evinced in the real economic practice to a much larger extent than intuition would suggest, see \cite{Kl09} and \cite{Dh09} among others. In addition, in a short term, one-period analysis, as the one that is conducted in the present paper, it is theoretically justified to assume constant prices. \par
Recent results provide a further argument in favor of price commitment by showing that wholesale price commitment can be beneficial in its own right and thus may be employed by the supplier even in the presence of price flexibility. As \cite{Wu12} note in their literature review, early wholesale price commitment is valuable for realizing downstream operational decisions before the investment is made, \cite{Gi03}, coordinating supply chain performance, \cite{De04}, or deterring the retailer from introducing a store brand, \cite{Gr10}.\par
Finally, changing or adjusting an unreasonable price is certainly possible in a multiple-period dynamic setting which can be thought of as a reasonable and interesting extension of the present one-period setting in which prices remain constant. Indeed, it would be relevant both from a theoretical and a practical perspective to adjust the present model to account for demand uncertainty that is distributed over multiple periods.

\subsection{Future work}\label{secfuture}
While the influence of the main model parameters has been thoroughly examined, there are still many directions for more exhaustive comparative statics. From a technical perspective, affinity of the inverse demand function may not be dropped if one still wants to express the supplier's payoff in terms of the MRL function. Hence, a generalization to models with non-affine inverse demand function is open. Finally, because the economic practice provides a huge variety of market structures, the extensions of the present model can be thought of in numerous ways. One immediate direction is the strategic determination of the production capacities $T$ (or $T_i$ if the retailers are asymmetric), which are assumed to be given exogenously (i.e., to be pre-specified) in the present model. Because production capacities are usually more rigid than prices, this should be done in a stage prior to the pricing decision of the supplier. Challenging the basic model assumptions -- e.g., that supplier's cost is higher than in-house production cost -- to account for different market structures, while retaining the basic analytical results of the present analysis (equilibrium strategies characterization) is another possible extension.

\section{Conclusions}
To study the strategic formation of the competing firms' cost in a modern Cournot oligopoly, we extended the classic model to a two-stage game. Oligopolists may produce limited capacities of a homogeneous good and refer to an external supplier for additional procurements. The supplier can cover any demand but sets a price prior to the retailers' orders without necessarily knowing the exact market parameters (retail demand). When the supplier is completely informed about the market demand, we derived the unique retailers' and supplier's equilibrium strategies as functions of the capacity levels, the  demand value and the supplier's production cost. While equilibrium strategies, payoffs of the retailers, and equilibrium payoff of the supplier are continuous and increasing in the demand level $\alpha$, the supplier's optimal pricing strategy may not be continuous and not even monotonic in $\alpha$. When the supplier is not informed of the retail demand, his belief is modeled by a continuous probability distribution function of the demand parameter. Under the mild assumption that the mean residual expectation of his belief about the actual demand is decreasing (DMRL property), we established existence and uniqueness of a Bayes-Nash equilibrium. Additionally, we characterized the supplier's optimal price as a fixed point of a translation of the MRL function, which enabled a tractable comparative statics and sensitivity analysis. \par
A comparison of the complete and incomplete information equilibrium outcomes in \Cref{secinef} indicated that under the supplier's optimal pricing policy, there is a considerable risk of no transaction between the supplier and the retailers, although such a transaction would have been beneficial for everyone. In that case, a reduced quantity will eventually be released to the consumers. Intuition on the reasons that lead the supplier to ask such a high price is gained through analytical and numerical considerations. If the retailers' production (or inventory) capacity increases, then (at equilibrium) the supplier's profit margin and wholesale price both decrease. If the supplier's cost increases, then his profit margin decreases, but the wholesale price that he asks increases. In this case, the retailers' orders to the supplier and hence the quantity sold to the consumers, both decrease. Finally, under the assumption that the retailers are symmetric, i.e., each retailer has production capacity $T$, we determined the market equilibrium for any number $n$ of them. Based on this extension, we studied the impact of an increasing number of retailers on the supplier's profit for various distributions.

\section*{Acknowledgments}
Stefanos Leonardos gratefully acknowledges support by a scholarship from the Alexander S. Onassis Public Benefit Foundation.

\appendix
\section{Appendix: General statements and proofs}\label{app1}
\subsection{Proofs of \texorpdfstring{\Cref{sec3}}{u}}

\begin{proof}[\textbf{Proof of \Cref{lem3.2}}]
For convenience in notation we will give the proof for $i=1$ and $j=2$ which results in no loss of generality due to symmetry of the retailers. Let $w\ge c$ and $s_2=\(t_2, q_2\) \in S^2$ with $Q_2=t_2+q_2$. Restricting ourselves firstly to strategies with $q_1=0$, we have that 
\begin{equation*}u^1\(\(t_1,0\),s_2\)=t_1\(\alpha-Q_2-t_1\) \end{equation*}
Let $t_1^*=\argmax_{0\le t_1 \le T} u^1\(\(t_1,0\), s_2 \)$. Since $\alpha-Q_i\ge 0, \, i=1,2$, we have that if $\alpha-2T\le Q_2$, then $t_1^*=\frac12\(\alpha-Q_2\)$ while if $\alpha-2T>Q_2$, then the maximum of $u^1\(\(t_1,0\),s_2\)$ is attained at the highest admissible (due to the production capacity constraint) value of $t_1$ and hence $t^*_1=T$. Similarly, for strategies with $t_1=T$ and $q_1>0$ we have that the payoff function of retailer $R_1$
\begin{align*}u^1\(\(T,q_1\),s_2\)&=Q_1\(\alpha-w-Q_2-Q_1\)+Tw\\&=T\(\alpha-Q_2-T\)+q_1\(\alpha-w-2T-Q_2-q_1\)\end{align*}
is maximized at $q^*_1=\dfrac12\(\alpha-w-2T-Q_2\)$ under the constraint that $q^*_1>0$ or equivalently $Q_2 < \alpha-w-2T$. If instead $q^*_1\le0$, then the maximum of $u^1\(\(T,q_1\),s_2\)$ is attained at the lowest admissible value of $q_1$ i.e. $q_1=0$. Since conditions $\alpha-2T\le Q_2$ and $Q_2<\alpha-w-2T$ cannot apply at the same time, the result obtains. 
\end{proof}

\begin{proof}[\textbf{Proof of \Cref{prop3.3full}}]
The proof proceeds by examining under what conditions a second stage equilibrium occurs as an intersection of parts $i,j = (1), (2), (3)$ of the best reply correspondences of the retailers.  \\
\emph{Case} $\Gamma_{11}$. In this case the best reply correspondences intersect in their parts denoted by (1). By rearranging part (1) of the best reply correspondence in Lemma \ref{lem3.2} and assuming that $R_i$ replies optimally to $R_j$ for $i,j=1,2$, we obtain that the total quantities released to the market under equilibrium will be given by $Q_1^*=Q_2^*=\frac{\alpha-w}{3}$ subject to the constraints $Q_2^*< \alpha-w-2T_1$ and $Q_1^*< \alpha-w-2T_2$. Substituting the values $Q^*_1$ and $Q^*_2$ in the constraints we find that these solutions are acceptable if $\frac{\alpha-w}{3}< \alpha-w-2T_1$ and $\frac{\alpha-w}{3}< \alpha-w-2T_2$ or equivalently if $\max{\left\{T_1, T_2\right\}} < \frac{\alpha-w}{3}$ which may be reduced to $T_1 <\dfrac{\alpha-w}{3}$ since $T_2\le T_1$ is assumed. Combining the above relations and decomposing $Q^*_i$ as in Lemma \ref{lem3.2} we obtain the first line of \Cref{tab:1} that settles case $\Gamma_{11}$. Similarly,\\
\emph{Case} $\Gamma_{21}$. Now, $Q^*_1=T_1$ and $Q^*_2=\frac{\alpha-w-T_1}{2}$ subject to $\alpha-w-2T_1\le \frac{\alpha-w-T_1}{2} < \alpha-2T_1$ and $0\le T_1 < \alpha-w-2T_2$. Solving the constraints, yields $\max{\left\{T_1+T_2+w, 3T_1-w\right\}}< \alpha \le 3T_1+w$. \\
\emph{Case} $\Gamma_{22}$. Now, $Q^*_i=T_i$, for $i=1,2$. These strategies are acceptable if $\alpha-w-2T_1\le T_2 < \alpha-2T_1$ and $\alpha-w-2T_2\le T_1 < \alpha-2T_2$ which gives $2T_1+T_2 < \alpha \le T_1+2T_2+w$.\\
\emph{Case} $\Gamma_{31}$. Now, $Q^*_1=\frac{\alpha-Q^*_2}{2}$ and $Q^*_2=\frac{\alpha-w-Q^*_1}{2}$. Hence, $Q^*_1=\frac{\alpha+w}{3}$ and $Q^*_2=\frac{\alpha-2w}{3}$ subject to $\alpha-2T_1\le \frac{\alpha-2w}{3}$ and $0\le \frac{\alpha+w}{3}< \alpha-w-2T_2$ or equivalently $3T_2+2w< \alpha \le 3T_1-w$.\\
\emph{Case} $\Gamma_{32}$. It is easy to see that $Q^*_1=\frac{\alpha-T_2}{2}$ and $Q^*_2=T_2$ subject to $\alpha-2T_1\le T_2$ and $\alpha-w-2T_2\le \frac{\alpha-T_2}{2} < \alpha-2T_2$, which yields $3T_2 < \alpha \le 3T_2+2w$. Finally,\\
\emph{Case} $\Gamma_{33}$. $Q^*_1=Q^*_2=\frac{\alpha}{3}$ subject to $\alpha-2T_1\le \frac{\alpha}{3}$ and $\alpha-2T_2\le \frac{\alpha}{3}$ or equivalently $\alpha \le 3T_2$.   
\end{proof}

\begin{proof}[\textbf{Proof of \Cref{supay}}]
By \Cref{tab:1}
\begin{align*}q^*_{31}\(w\)&=\frac13 \(\alpha-2w-3T_2\), &&\hspace{-10pt}\text{ if } 3T_2+2w < \alpha \le 3T_1-w \\
q^*_{21}\(w\)&=\frac12 \(\alpha-w-T_1-2T_2\), &&\hspace{-10pt}\text{ if} \max{\{3T_1-w, T_1+2T_2+w\}} < \alpha \le 3T_1+w \\
q^*_{11}\(w\)&=\frac23 \(\alpha-w-\frac32T_1-\frac32T_2\), &&\hspace{-10pt} \text{ if } 3T_1+w < \alpha \\ 
q^*_{ij}\(w\)&=0, &&\hspace{-10pt} \text{ else }\end{align*}
and since $w=r+c$, with $c>0$ being a constant and $r \geq 0$ being the strategic variable, the payoff function of the supplier may be written as
\[u^s\(r\mid \alpha\) = r\cdot \begin{cases}
q^*_{31}\(r+c\), & 3T_2+2c+2r< \alpha \le 3T_1-c-r \\
q^*_{21}\(r+c\), & \max{\left\{3T_1-c-r, T_1+2T_2+c+r\right\}}< \alpha \le 3T_1+c+r \\
q^*_{11}\(r+c\), & 3T_1+c+r < \alpha \\ 
0, & \text{ else} \end{cases}\]
for $r\ge 0$, assuming that a subgame perfect equilibrium is played in the second stage. Re-arranging the conditions in the last column and taking into account the continuity of $u^s\(r\mid \alpha\)$ at the cutting points of the $\alpha$-intervals and the non-negativity constraint for $r$, the claim follows.
\end{proof}

\begin{proof}[\textbf{Proof of \Cref{prop3.4full}}]
The optimal values $r^*$ are denoted by $r^*_{ij}, j\le i\in \{1,2,3\}$, according to the equilibrium that is played in the second stage.\\[0.2cm]
\textbf{Case A.} Let $0<c\le D$. Then, $3T_2+2c \le T_1+2T_2+c \le 2T_1+T_2 \le 3T_1-c \le 3T_1+c$
and hence for the different values of $\alpha$ we have \\
\textbf{A1.} $0\le \alpha \le 3T_2+2c$. For all $r \in \mathbb R_+$ we have that $u^s\(r\mid \alpha\) \equiv 0$ and hence $r^*=r \in \mathbb R_+$. \\
\textbf{A2.} $3T_2+2c \le \alpha \le 2T_1+T_2$. The supplier's payoff function is given by
\[u^s\(r\mid \alpha\)=r\cdot \begin{cases}q^*_{31}\(r+c\), & 0\le r \le \frac12\(\alpha-2c-3T_2\) \\0, & \text{ else}\end{cases}\]
and hence, as a quadratic polynomial in $r$, it's first part is maximized at $r^*=r^*_{31}$.\\
\textbf{A3.} $2T_1+T_2\le \alpha \le 3T_1-c$. Now 
\[u^s\(r\mid \alpha\)=r\cdot \begin{cases}q^*_{31}\(r+c\), & 0\le r \le 3T_1-c-\alpha \\q^*_{21}\(r+c\), & 3T_1-c-\alpha \le r \le \alpha-c-T_1-2T_2 \\ 0, & \text{ else}\end{cases}\]
As quadratic polynomials in $r$, the first part is maximized at $r^*_{31}$ and the second at $r^*_{21}$. It is easy to see that $0\le r^*_{31}\le r^*_{21}\le \alpha-c-T_1-2T_2$. Hence, in order to determine the maximum of $u^s\(r\mid \alpha\)$ with respect to $r$ we distinguish three sub-cases. Let $S:=T_1+T_2$, then \par
\textbf{A3.1.} $0 \le r^*_{31} \le r^*_{21}\le 3T_1-c-\alpha$. Then the overall maximum of $u^s\(r\mid \alpha\)$ is attained at $r^*_{31}$. The inequality $r^*_{21}\le 3T_1-c-\alpha $ holds iff $\frac12\(\alpha-c-T_1-2T_2\)\le 3T_1-c-\alpha \Leftrightarrow \alpha \le \frac32S+\frac56D-\frac13c$.\par
\textbf{A3.2.} $3T_1-c-\alpha \le r^*_{31}\le r^*_{21}$. Then the overall maximum of $u^s\(r\mid \alpha\)$ is attained at $r^*_{21}$. The inequality $3T_1-c-\alpha \le r^*_{31}$ holds iff $ 3T_1-c-\alpha \le \frac14\(\alpha-2c-3T_2\) \Leftrightarrow \alpha \ge \frac32S+\frac9{10}D-\frac25c$.\par
\textbf{A3.3.} $0 \le r^*_{31} \le 3T_1-c-\alpha \le r^*_{21}$. In this case we need to compare the payoffs $u^s\(r^*_{31}\mid \alpha\)$ and $u^s\(r^*_{21}\mid \alpha\)$. The overall maximum is attained at $r^*_{31}$ iff $u^s\(r^*_{21}\mid \alpha\) \le u^s\(r^*_{31}\mid \alpha\) \Leftrightarrow \frac18\(\alpha-c-T_1-2T_2\)^2 \le \frac1{24}(\alpha-2c-3T_2)^2$. Both terms $\(\alpha-c-T_1-2T_2\)$ and $\(\alpha-2c-3T_2\)$ are non-negative since $\alpha \ge 2T_1+T_2$ and $c \le T_1-T_2$ hold by assumption. Hence, we may take the square root of both sides to obtain that $u^s\(r^*_{21}\mid \alpha\) \le u^s\(r^*_{31}\mid \alpha\)$ if and only if $\(\sqrt{3}-1\)\alpha \le \sqrt{3}T_1+\(2\sqrt{3}-3\)T_2+\(\sqrt{3}-2\)c$ which is in turn equivalent to $\alpha \le \frac32S+\frac{\sqrt{3}}{2}D-\frac{\sqrt{3}-1}{2}c$. Now, it is straightforward to check that the following ordering is equivalent to $c\leq D$, which is true by the defining condition of Case A. 
\[2T_1+T_2 \leq \frac32S+\frac56D-\frac13c \leq \frac32S+\frac{\sqrt{3}}{2}D-\frac{\sqrt{3}-1}{2}c \le \frac32S+\frac9{10}D-\frac25c \le 3T_1-c\]
Hence, by the previous discussion and after observing that $\frac32S+\frac{\sqrt{3}}{2}D-\frac{\sqrt{3}-1}{2}c = 3T_2+\Delta-\frac{\sqrt{3}-1}{2}c$, we conclude that the optimal solution $r^*$ in this case is given by 
\[r^*=\begin{cases}r^*_{31}=\frac14\(\alpha-2c-3T_2\), & \text{ if }\,\, 2T_1+T_2\le \alpha \le  3T_2+\Delta-\frac{\sqrt{3}-1}{2}c\\[0.2cm]
r^*_{21}=\frac12\(\alpha-c-T_1-2T_2\), & \text{ if } \,\,\, 3T_2+\Delta-\frac{\sqrt{3}-1}{2}c \le \alpha < 3T_1-c\end{cases}\]
\textbf{A4.} $3T_1-c \le \alpha \le 3T_1+c$. Now, the supplier's payoff function is given by
\[u^s\(r\mid \alpha\)=r\cdot \begin{cases}q^*_{21}\(r+c\), & 0\le r \le \alpha-c-T_1-2T_2 \\0, & \text{ else}\end{cases}\]
and hence, as a quadratic polynomial in $r$, it is maximized at $r^*=r^*_{21}$.\\
\textbf{A5.} $3T_1+c\le \alpha$. Now 
\[u^s\(r\mid \alpha\)=r\cdot \begin{cases}q^*_{11}\(r+c\), & 0\le r \le \alpha-c-3T_1 \\q^*_{21}\(r+c\), & \alpha-c-3T_1 \le r \le \alpha-c-T_1-2T_2 \\ 0, & \text{ else} \end{cases}\]
Now, the first part is maximized at $r^*_{11}$ and the second at $r^*_{21}$. Again, one checks easily that $0 < r^*_{11} < r^*_{21} < \alpha-c-T_1-2T_2$. As in Case A3, in order to determine the maximum of $u^s\(r\mid \alpha\)$ with respect to $r$ we distinguish three sub-cases.\par
\textbf{A5.1.} $0< r^*_{11}< r^*_{21}\le \alpha-c-3T_1$. Then the overall maximum of $u^s\(r\mid \alpha\)$ is attained at $r^*_{11}$. The inequality $r^*_{21}\le \alpha-c-3T_1$ holds iff $\frac12\(\alpha-c-T_1-2T_2\)\le \alpha-c-3T_1 \Leftrightarrow \alpha \geq \frac32 S+\frac72 D+c$.\par
\textbf{A5.2.} $\alpha-c-3T_1\le r^*_{11}< r^*_{21}$. Then the overall maximum of $u^s\(r\mid \alpha\)$ is attained at $r^*_{21}$. The inequality $\alpha-c-3T_1\le r^*_{11}$ holds iff $\alpha-c-3T_1\le \frac12\(\alpha-c-\frac32T_1-\frac32T_2\) \Leftrightarrow \alpha \leq \frac32 S+3D+c$.\par
\textbf{A5.3.} $0<r^*_{11} < \alpha-c-3T_1 < r^*_{21}$. In this case we need to compare the payoffs $u^s\(r^*_{11}\mid \alpha\)$ and $u^s\(r^*_{21}\mid \alpha\)$. The overall maximum is attained at $r^*_{11}$ if and only if $u^s\(r^*_{21}\mid \alpha\) \le u^s\(r^*_{11}\mid \alpha\)$ or equivalently if $\frac18(\alpha-c-T_1-2T_2)^2 \le \frac16\(\alpha-c-\frac32T_1-\frac32T_2\)^2$. Both terms $\(\alpha-c-T_1-2T_2\)$ and $\(\alpha-c-\frac32T_1-\frac32T_2\)$ are positive since $\alpha \ge 3T_1+c$ holds by assumption. Hence, we may take the square root of both sides to obtain that $u^s\(r^*_{21}\mid \alpha\) \le u^s\(r^*_{11}\mid \alpha\)$ if and only if $\(3-\sqrt{3}\)T_1+\(3-2\sqrt{3}\)T_2+\(2-\sqrt{3}\)c \le \(2-\sqrt{3}\)\alpha$ which is in turn equivalent to $\frac32 S+ \(\frac32 + \sqrt{3}\)D +c \le \alpha$.\par
Now, since $\frac32 S+\(\frac32+\sqrt{3}\)D+c = 3T_2+2\Delta+c$ and $3T_1+c \le \frac32 S+3D+c \le \frac32 S+\(\frac32+\sqrt{3}\)D+c \le \frac32 S+\frac72 D+c$, we conclude that the optimal solution $r^*$ in this case is given by 
\[r^*=\begin{cases}r^*_{21}=\frac12\(\alpha-c-T_1-2T_2\), & \text{ if }\,\, 3T_1+c\le \alpha \le  3T_2+2\Delta+c\\[0.2cm]
r^*_{11}=\frac12\(\alpha-c-\frac32T_1-\frac32T_2\), & \text{ if } \,\,\, 3T_2+2\Delta+c \le \alpha \end{cases}\]
This concludes case \textbf{A}.\\[0.2cm]
\textbf{Case B.} Let $0\le D<c$. Then, $3T_1-c< 2T_1+T_2 < T_1+2T_2+c < \min \(3T_1+c,3T_2+2c\)$ and hence for the different values of $\alpha$ we have \\
\textbf{B1.} $0\le \alpha \le T_1+2T_2+c$. Now $u^s\(r\mid \alpha\) \equiv 0$ for all $r \in \mathbb R_+$ and hence $r^*=r \in \mathbb R_+$. \\
\textbf{B2.} $T_1+2T_2+c \le \alpha \le 3T_1+c$. Now, the supplier's payoff function is given by
\[u^s\(r\mid \alpha\)=r\cdot \begin{cases}q^*_{21}\(r+c\), & 0\le r \le \alpha-T_1-2T_2-c \\0, & \text{ else}\end{cases}\]
and hence as a quadratic polynomial in $r$, it is maximized at $r^*=r^*_{21}$.\\
\textbf{B3.} $3T_1+c\le \alpha$. This case is identical to A5 above. 
Collecting the results about the optimal strategy of the supplier in all sub-cases of \textbf{A} and \textbf{B}, we obtain the claim of Proposition \ref{prop3.4full}.
\end{proof}

\subsection{Proofs of \texorpdfstring{\Cref{sec4}}{l}}

\begin{lemma}\label{lem diffa}
The supplier's payoff function $\us$ is continuously differentiable on $\(0,r_H\)$ and
\begin{equation}\label{10} \fdr=\frac23\int_{3T+c+r}^{\infty}\(1-F\(x\)\)dx-\frac23r\(1-F\(3T+c+r\)\). \end{equation} 
\end{lemma}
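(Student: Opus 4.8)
The plan is to write the payoff in the factored form $\us = \frac{2}{3} r\, G\(r\)$, where $G\(r\) := \int_{3T+c+r}^{\infty}\(1-F\(x\)\)dx$ is the tail integral appearing in \eqref{9}, and then to differentiate by the product rule. The only non-elementary ingredient is the derivative of $G$, a definite integral whose lower limit moves with $r$; I would obtain it from the fundamental theorem of calculus once the integrand is shown to be continuous and the integral finite.

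First I would record that $G\(r\)$ is finite for every $r \in \(0, r_H\)$. Since $\alpha \ge 0$, the tail formula already invoked in deriving \eqref{9} gives $\ex\(\alpha\) = \int_0^{\infty}\(1-F\(x\)\)dx$, which is finite by assumption; as $1-F \ge 0$, the tail $G\(r\)$ is dominated by $\ex\(\alpha\)$ and is therefore finite. Next, because $F$ is a continuous cdf, the integrand $x \mapsto 1-F\(x\)$ is continuous on $[0,\infty)$, so the fundamental theorem of calculus applies to the variable lower limit $a\(r\):=3T+c+r$ and yields $G'\(r\) = -\(1-F\(3T+c+r\)\)$, the minus sign coming from differentiating the lower rather than the upper limit, with $a'\(r\)=1$.

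Combining these by the product rule gives
\begin{equation*}
\fdr = \frac{2}{3} G\(r\) + \frac{2}{3} r\, G'\(r\) = \frac{2}{3}\int_{3T+c+r}^{\infty}\(1-F\(x\)\)dx - \frac{2}{3} r\(1-F\(3T+c+r\)\),
\end{equation*}
which is exactly \eqref{10}. To upgrade differentiability to continuous differentiability I would then note that both summands on the right are continuous on $\(0,r_H\)$: the first is continuous (indeed differentiable) as an integral of a continuous function, and the second is the product of $r$ with $1-F\(3T+c+r\)$, which is continuous precisely because $F$ is continuous. Hence $\fdr$ is continuous, so $\us \in C^1\(0,r_H\)$.

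The single delicate point, and the one I would state carefully, is the justification of $G'\(r\)$: the differentiation of a tail integral with a moving endpoint relies on the continuity of $F$ to invoke the fundamental theorem of calculus and on $\ex\(\alpha\)<\infty$ to guarantee convergence of the tail. Both hypotheses are supplied by the model assumptions, so in particular no density for $F$ is needed; everything else reduces to a routine product-rule computation.
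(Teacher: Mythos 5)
Your proof is correct, but it takes a genuinely different route from the paper's. The paper differentiates $\ex\(\alpha-3T-c-r\)^{+}$ directly from its definition: it forms the difference quotient $K_h\(\alpha\)$, identifies its pointwise limit as an indicator function, and applies the dominated convergence theorem separately for $h\to 0+$ and $h\to 0-$, invoking non-atomicity to equate $P\(\alpha>3T+c+r\)$ with $P\(\alpha\ge 3T+c+r\)$ and thereby reconcile the one-sided derivatives. You instead start from the tail-integral representation \eqref{9} (which the paper has already established before the lemma, so you are entitled to it) and differentiate the moving lower limit by the fundamental theorem of calculus, using the continuity of $F$ to make the integrand $1-F$ continuous and $\ex\(\alpha\)<\infty$ to make the tail converge. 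Both arguments rest on exactly the same hypothesis --- non-atomicity of the belief, i.e.\ continuity of $F$ --- but deploy it at different points: the paper uses it to match left and right limits of the difference quotient, you use it to place yourself in the classical FTC setting (note that $1-F$ is monotone and bounded, hence Riemann integrable on compacts, so there is no Lebesgue-versus-Riemann subtlety lurking here). Your route is shorter and more elementary given \eqref{9}; the paper's route has the merit of working directly with the expectation and displaying transparently that the only obstruction to differentiability is an atom of $F$ at $3T+c+r$, which is the conceptual content of the non-atomicity assumption. Your treatment of continuous differentiability at the end is also sound and matches what the paper asserts.
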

\begin{proof} First observe that since $\(\alpha-3T-c-r\)^{+}$ is non-negative, $\ex(\alpha-3T-c-r)^{+} = \int_{0}^{\infty}P((\alpha-3T-c-r)^{+} >y) dy$, see e.g.\cite{Bi86} and hence, by a simple change of variable, $\ex(\alpha-3T-c-r)^{+} = \int_{3T+c+r}^{\infty}\(1-F\(x\)\)dx$ which implies that
\begin{equation}\label{9} \us=\frac23r\int_{3T+c+r}^{\infty}\(1-F\(x\)\)dx \;\; \mbox{for $0 \le r<r_H$}. \end{equation}
Hence, to prove the assertion of the Lemma, it suffices to show that $\frac{\mathop{d}}{\mathop{dr}}\ex\(\alpha-3T-c-r\)^{+} = - (1-F(3T+c+r))$. Then, equation \eqref{10} as well as continuity are implied. So, let 
\begin{equation}\label{11}  K_{h}\(\alpha\):=-\frac1h\,\left[\(\alpha-3T-c-r-h\)^+-\(\alpha-3T-c-r\)^+\right] \end{equation} and take $h>0$. Then, \[K_h\(\alpha\)= \mathbf{1}_{\{\alpha>3T+c+r+h\}} + \dfrac{\alpha-3T-c-r}{h}\mathbf{1}_{\{3T+c+r< \alpha \le 3T+c+r+h\}}\] and therefore $\lim_{h\to 0+}K_{h}\(\alpha\)=\mathbf{1}_{\{\alpha>3T+c+r\}}$. Since $0\le K_h\(\alpha\) \le 1$ for all $\alpha$, the dominated convergence theorem implies that $\lim_{h\to0+}\ex\(K_h\(\alpha\)\) = P\(\alpha>3T+c+r\)$. In a similar fashion, one may show that $\lim_{h\to0-}\ex\(K_h\(\alpha\)\) = P\(\alpha \geq 3T+c+r\)$. Since the distribution of $\alpha$ is non-atomic, $P\(\alpha>3T+c+r\) = P\(\alpha \geq 3T+c+r\)$ and hence, $\lim_{h\to0}\ex\(K_h\(\alpha\)\) = 1-F\(3T+c+r\)$. By \cref{11}, $\lim_{h\to0}\ex\(K_h\(\alpha\)\) = -\frac{\mathop{d}}{\mathop{dr}}\ex\(\alpha-3T-c-r\)^{+}$, which concludes the proof.
\end{proof}

\begin{proof}[\textbf{Proof of \Cref{lem diff}}]
The formulas for $\us$ and $\fdr$ are immediate using \cref{9}, \cref{10}, and \cref{imrl}. We remark that since $\ex\(\alpha-3T-c-r\)^{+} = \m\(3T+c+r\)\(1-F\(3T+c+r\)\)$, one may be tempted to use the product rule to derive its derivative and hence show that $\us$ is differentiable. However, the product rule does not apply, since the two terms in this expression of $\ex\(\alpha-3T-c-r\)^{+}$ may both be non-differentiable, even if $\alpha$ has a density and its support is connected (e.g. consider the point $r_L$ in case $r_L>0$). Finally, if $r \in \(0,r_H\)$, then $1-F\(3T+c+r\)$ is positive. Hence, equation \eqref{der} implies that the critical points $r^*$ of $\us$ (if any) satisfy $r^*=\m\(r^*+3T+c\)$.   \end{proof} 

\begin{proof}[\textbf{Proof of \Cref{thm4.1}}]
Due to Lemma \ref{lem diff}, if a non-zero optimal response of the supplier exists at equilibrium, it will be a critical point of $\us$, i.e. it will satisfy \eqref{14}. It is easy to see that such a response always exists when the support of $\alpha$ is bounded, i.e. when $\alpha_H < \infty$. However, this is not the case when $\alpha_H = \infty$. So, let us determine conditions under which such a critical point exists, is unique and corresponds to a global maximum of the supplier's payoff function. To this end, we study the first term of \eqref{der}, namely $g(r):=\m\(r+3T+c\) - r$. \par 
Clearly, $g\(r\)$ is continuous on $\(0,r_H\)$. We first show that $\lim_{r \to 0+} g(r) > 0$ by considering cases. If $r_L > 0$, then for $0 < r < r_L$, $\m\(r+3T+c\) = \ex(\alpha) -r -3T -c$. Hence, $\lim_{r \to 0+} g(r) = \ex(\alpha) -3T -c> r_L >0$. If $r_L \leq 0$, then we use Proposition 1f of \cite{Ha81}, according to which $\m\(t\)\ge \(\ex\(\alpha\)-t\)^+$ with equality if and only if $F\(t\)=0$ or $F\(t\)=1$. So, take first $r_L < 0$, which implies $\alpha_L < 3T+c < \alpha_H$. Hence, $0< F(3T+c) < 1$ and (by Proposition 1f) $\m \(3T+c\) > \ex\(\alpha -3T-c\)^+ \geq 0$. Hence, $\lim_{r \to 0+} g\(r\) > 0$ if $r_L < 0$. Finally, if $r_L = 0$, then $\m \(3T+c\) =\m\(\alpha_L\)=\ex\(\alpha\)-\alpha_L >0$, which again implies that $\lim_{r \to 0+} g\(r\) > 0$. \par
We then examine the behavior of $g\(r\)$ near $r_H$. If $\alpha_H<+\infty$, then $\lim_{r \to r_H-} g\(r\) = -r_H < 0$ and by the intermediate value theorem an $r^* \in \(0,r_H\)$ exists such that $g\(r^*\)=0$. For $\alpha_H<+\infty$, we also notice that to get uniqueness of the critical point $r^*$, it suffices to assume that the Mean Residual Lifetime (MRL) of the distribution of $\alpha$ is decreasing\footnote{We use the terms ``decreasing'' in the sense of ``non-increasing'' (i.e. flat spots are permitted), as they do in the pertinent literature, where this use of the term has been established.}, in short that $F$ has the DMRL property. On the other hand, if $\alpha_H=+\infty$, then the limiting behavior of $\m\(r\)$ as $r$ increases to infinity may vary, see \cite{Br03}, and an optimal solution may not exist. But, if we assume as before that $\m\(r\)$ is decreasing, then $g\(r\)$ will eventually become negative and stay negative as $r$ increases and hence, existence along with uniqueness of an $r^*$ such that $g\(r^*\)=0$ is again established.\par
Now, $1-F\(3T+c\) >0$ since $3T+c < \alpha_H$ and hence $\lim_{r \to 0+} \fdr > 0$, i.e. $\us$ starts increasing on $\(0, r_H\)$. Assuming that $F$ has the DMRL property, the first term of \eqref{der} is negative in a neighborhood of $r_H$ while the second term goes to 0 from positive values. Hence, $\fdr < 0$ in a neighborhood of $r_H$, i.e $\us$ is decreasing as $r$ approaches $r_H$.
Clearly, for $\epsilon$ sufficiently small, $\us$ will take a maximum in the interior of the interval $[\epsilon, r_H -\epsilon]$ if $r_H < \infty$ or a maximum in the interior of the interval $[\epsilon, \infty)$ if $r_H = \infty$. Since $\us$ is differentiable, the maximum will be attained at a critical point of $\us$, i.e. at the unique $r^*$ given implicitly by \eqref{14}.\par
\Cref{14} actually characterizes $r^*$ as the fixed point of a translation of the MRL function $\m\(\cdot\)$, namely of $\m\(\cdot +3T+c\)$. Its evaluation sometimes has to be numeric, but in one interesting case it may be evaluated explicitly: If $r_L >0$, then $\alpha -3T-c > 0$ for all $\alpha$, which implies that $\ex(\alpha) -3T-c > 0$. Then, if $\frac12\(\ex\(\alpha\) -3T-c\) \leq r_L,$
we get $r^* = \frac12\(\ex(\alpha) -3T-c\)$. To see this, notice that the previous equation is equivalent to $\m\(\alpha_L\) \le r_L$, i.e. to $\m\(r_L+3T+c\)\le r_L$. Then, by the DMRL property $\m\(r+3T+c\)<r$ for all $r>r_L$. This implies that $r^*\le r_L$ or equivalently that $r^*+3T+c \le \alpha_L$. In this case $\m\(r^*+3T+c\)=\ex\(\alpha\)-\(r^*+3T+c\)$ and hence $r^*$ will be given explicitly by $r^*=\frac12\(\ex\(\alpha\)-3T-c\)$. Intuitively, this special case occurs under the conditions that (a) the lower bound of the demand $\alpha_L$ exceeds the particular threshold $3T+c$, (i.e. $\alpha_L > 3T+c$ \, or \, $r_L > 0$), and (b) the expected excess of $\alpha$ over its lower bound $\alpha_L$ is at most equal to the excess of $\alpha_L$ over $3T+c$ (i.e. $\ex\(\alpha\) - \alpha_L \leq \alpha_L -3T-c = r_L$). Of course, since $\ex\(\alpha\) - \alpha_L >0$, condition (b) suffices. In that case, compare with the optimal $r^*$ of the complete information case (Proposition \ref{prop3.4}). \par
Finally, if $r_L >0$ and $\frac12\(\ex\(\alpha\) -3T-c\) > r_L$ (i.e, if $\ex\(\alpha\) > r_L +\alpha_L$) we get that $r^* > r_L$, for if $r^* \leq r_L$, then $\m\(r^*+3T+c\)
\ge \m \(\alpha_L\)$, i.e. $r^* \ge \ex\(\alpha\) - \alpha_L$. The latter implies that then $r_L \ge \ex\(\alpha\) - \alpha_L$ which contradicts the assumption.
\end{proof}

\begin{proof}[\textbf{Proof of \Cref{prop7.1}}]
For $i\in N$, the best reply correspondence of retailer $R_i$ is given by Lemma \ref{lem3.2} if we replace $Q_j$ by $Q_{-i}$. Hence, we may simplify the proof by fixing $i \in N$ and distinguishing the following cases: \\[0.2cm]
\textbf{Case 1.} Let $0 \le \alpha \le \(n+1\)T$ and for $N \ni j\neq i$ let $t^*_j=\frac1{n+1}\alpha, q^*_j=0$. Then, $Q^*_{-i}=\frac{n-1}{n+1}\alpha\ge \alpha-2T$. Hence, by Lemma \ref{lem3.2}, $\operatorname{BR}^i\(Q^*_{-i}\)=\(t^*_i, q^*_i\)=\(\frac{\alpha}{n+1}, 0\)$.\\
\textbf{Case 2.} Let $\(n+1\)T< \alpha \le \(n+1\)T+w$ and for $N\ni j \neq i$ let $t^*_j=T, q^*_j=0$. Then $Q^*_{-i}=\(n-1\)T$ with $\alpha-w-2T\le Q^*_{-i}<\alpha-2T$. Hence, by Lemma \ref{lem3.2}, $\operatorname{BR}^i\(Q^*_{-i}\)=\(t^*_i, q^*_i\)=\(T, 0\)$.\\
\textbf{Case 3.} Let $\(n+1\)T+w<\alpha$ and for $N\ni j\neq i$ let $t^*_j=T, q^*_j=\frac1{n+1}\(\alpha-\(n+1\)T-w\)$. Then $Q^*_{-i}=\frac{n-1}{n+1}\(\alpha-w\)<\alpha-w-2T$. As above $\operatorname{BR}^i\(Q^*_{-i}\)=\(t^*_i, q^*_i\)=\(T, \frac1{n+1}\(\alpha-\(n+1\)T-w\)\)$.
Summing up, we obtain the equilibrium strategies as given in Proposition \ref{prop7.1}.
\end{proof}

\subsection{Proofs of \texorpdfstring{\Cref{secinef}}{dua}}

\begin{proof}[\textbf{Proof of \Cref{lemprob}}]
Using $S$ for the support of $F$, we get $U = \left\{\alpha\mid \alpha \in S  \mbox{ and }  \alpha > 3T +c\right\}$ and $V = \{\alpha\mid \alpha \in S \; \mbox{and} \; \alpha \leq r^* +3T +c\}$. So, $V \cap U = \{\alpha \mid \alpha \in S \; \mbox{and} \; 3T +c < \alpha \le r^* +3T +c\}$, and therefore $P\(V \cap U\) = F\(r^* +3T +c\) - F\(3T +c\)$ and \[P\(V \mid U\) = \frac{F\(r^* +3T +c\) - F\(3T +c\)} {1-F\(3T +c\)}\] \par
Since, by assumption $\ex\(\alpha\) - \alpha_L > r_L$, \Cref{thm4.1} implies that the supplier will sell at $r^*\in \(r_L^+, r_H\)$ in equilibrium. So, there are two cases, either (a) $r_L > 0$ or (b) $r_L \leq 0$. In case (a), we get $3T + c < \alpha_L < r^* +3T +c <\alpha_H$, hence $V \cap U = \left[\alpha_L, r^* +3T +c\right]\cap S$. In case (b), $\alpha_L \leq 3T + c < r^* +3T +c <\alpha_H$, hence $V \cap U = \(3T +c, r^* +3T +c\right]\cap S$ (notice that for $\alpha \leq 3T +c$ no transaction would have taken place under complete information also). Hence, $P\(V \cap U\) = F\(r^* +3T +c\) - F\(3T +c\)$ which concludes the proof.
\end{proof}

\begin{proof}[\textbf{Proof of \Cref{thmbound}}]
Firstly, we express the distribution function $F$ in terms of the MRL function, e.g. see \cite{Gu88}, to get
\begin{equation} \label{inv} 1-F\(t\)=\frac{\ex\(\alpha\)}{\m\(t\)}\exp{\left\{-\int_{0}^{t}\frac{1}{\m\(u\)}\mathop{du}\right\}} \; \; \mbox{for}\; \, 0\le t < \alpha_H\end{equation}
We will use the DMRL property, cf. \eqref{16}, to derive an upper bound for $P\(V \mid U\)$. To this end, we use \eqref {inv} first for $t=r^*+3T+c$, then for $t=3T+c$, and then, dividing the two equations (division by 0 is no danger, see the discussion preceding Lemma \ref{lemprob}), we get
\begin{equation*}1-F\(r^*+3T+c\)=\(1-F\(3T+c\)\)\frac{\m\(3T+c\)}{\m\(r^*+3T+c\)}\exp{\left\{-\int_{3T+c}^{r^*+3T+c}\frac{1}{\m\(u\)}du\right\}} \, .\end{equation*}
Hence, since $P\(V \cap U\)=1-F\(3T+c\)-\(1-F\(r^*+3T+c\)\)$, we have that \[P\(V\cap U\)=\(1-F\(3T+c\)\)\(1-\frac{\m\(3T+c\)}{\m\(r^*+3T+c\)}\exp{\left\{-\int_{3T+c}^{r^*+3T+c}\frac{1}{\m\(u\)}du\right\}}\),\]
which shows that $P\(V \mid U\) = 1-\frac{\m\(3T+c\)}{\m\(r^*+3T+c\)}\exp{\left\{-\int_{3T+c}^{r^*+3T+c}\frac{1}{\m\(u\)}du\right\}}.$
By inequality \eqref{16} for $3T +c \leq u \leq r^* +3T +c$ and the monotonicity of the exponential function
\begin{equation*}\exp{\left\{-\frac{1}{\m\(r^*+3T+c\)}\int_{3T+c}^{r^*+3T+c}du\right\}}\le \exp{\left\{-\int_{3T+c}^{r^*+3T+c}\frac{1}{\m\(u\)}du\right\}} \, . \end{equation*}
Using the fact that $r^*=\m\(r^*+3T+c\)$, the last inequality becomes $\exp{\left\{-1\right\}}\le \exp{\left\{-\int_{3T+c}^{r^*+3T+c}\frac{1}{\m\(u\)}du\right\}}$. Substituting in $P\(V\mid U\)$ we derive the following upper bound for $P\(V \mid U\)$
\[P\(V \mid U\) \le 1-\frac{\m\(3T+c\)}{\m\(r^*+3T+c\)} e^{-1}\] Since $\m\(r^*+3T+c\)\le \m\(3T+c\)$ by the DMRL property, the upper bound may be relaxed to $P(V \mid U)\le 1-e^{-1}$. To see that this bound is indeed tight, let $\alpha \sim \exp\(\lambda\)$, i.e. $f\(\alpha\)=\lambda e^{-\lambda \alpha} 1_{\{0 \leq \alpha < \infty\}}$, with $\lambda >0$ and let $T\ge 0$. Since $\m\(t\)= \dfrac{1}{\lambda}-t\cdot \mathbf{1}_{\{t\le 0\}}$, $F$ is DMRL. By \Cref{thm4.1}, the optimal strategy $r^*$ of the supplier is independent of $T,c$ and is given by $r^*=\frac1\lambda$. The conditional probability of no transaction $P\(V \mid U\)$ is also independent of $T,c$ and equal to $1-e^{-1}$ due to the memoryless property of the exponential distribution (i.e. $P\(\alpha>s+t\mid \alpha>t\)=P\(\alpha>s\)$). Indeed,  
\begin{align*}P\(V \mid U\)&= P\(\alpha \leq r^*+3T+c \mid \alpha> 3T+c\)\\&=1-P\(\alpha> r^*+3T+c \mid \alpha> 3T+c\)\\&=1-P\(\alpha>r^*\)=F\(1/\lambda\)=1-e^{-1}\end{align*}
implying that the inequality in \eqref{bound} is tight over all DMRL distributions. 
\end{proof}

\setstretch{1}\fontsize{9}{11}\selectfont


\begin{thebibliography}{}

\bibitem[Amir {\protect[}1996{\protect]}]{Am96}
Amir, R., [1996] Cournot Oligopoly and the Theory of Supermodular Games, {\it Games and Economic Behavior},
Vol.~15, No.~2, 132--148, \doi{10.1006/game.1996.0062}.

\bibitem[Bagnoli and Bergstrom {\protect[}2005{\protect]}]{Ba05} 
Bagnoli, M. and Bergstrom, T., [2005] Log-concave probability and its applications, {\it Economic Theory}, Vol.~26, No.~2, 445--469, \doi{10.1007/s00199-004-0514-4}.

\bibitem[Banciu and Mirchandani {\protect[}2013{\protect]}]{Ba13}
Banciu, M. and Mirchandani, P., [2013] Technical Note--New Results Concerning Probability Distributions with Increasing Generalized Failure Rates, {\it Operations Research},
Vol.~51, No.~4, 925--931, \doi{10.1287/opre.2013.1198}.

\bibitem[Belzunce et al. {\protect[}2016{\protect]}]{Be16}
Belzunce, F., Martinez-Riquelme, C. and Mulero, J., [2016] An Introduction to Stochastic Orders, {\it Academic Press}, \doi{10.1016/B978-0-12-803768-3.00002-8}.

\bibitem[Bernstein and Federgruen {\protect[}2005{\protect]}]{Be05}
Bernstein, F. and Federgruen, A., [2005] Decentralized Supply Chains with Competing Retailers Under Demand Uncertainty, {\it Management Science},
Vol.~51, No.~1, 18--29, \doi{10.1287/mnsc.1040.0218}.

\bibitem[Bischi et al. {\protect[}2009{\protect]}]{Bi09}
Bischi, G.I., Chiarella, C., Kopel, M. and Szidarovszky, F., [2009] Nonlinear Oligopolies: Stability and Bifurcations, {\it Springer}, Berlin Heidelberg.

\bibitem[Billingsley {\protect[}1986{\protect]}]{Bi86}
Billingsley, P., [1986] Probability and measure, 2nd edition, {\it Wiley, New York}.

\bibitem[Bradley and Gupta {\protect[}2003a{\protect]}]{Br03}
Bradley, D. and Gupta, R., [2003] Limiting behaviour of the mean residual life, {\it Annals of the Institute of Statistical Mathematics},
Vol.~55, No.~1, 217--226, \doi{10.1007/BF02530495}.

\bibitem[Bradley and Gupta {\protect[}2003b{\protect]}]{Gu03}
Bradley, D. and Gupta, R., [2003] Representing the mean residual life in terms of the failure rate, {\it Mathematical and Computer Modelling},
Vol.~37, No.~12--13, 1271--1280, \doi{10.1016/S0895-7177(03)90038-0}.

\bibitem[Cachon and Netessine {\protect[}2004{\protect]}]{Ca04}
Cachon, G. and Netessine, S., [2004] Game theory in supply chain analysis, {\it Handbook of Quantitative Supply Chain Analysis},
International Series in Operations Research \& Management Science, Vol.~74, No.~3, 13--65, \doi{10.1007/978-1-4020-7953-5_2}.

\bibitem[Chen and Frank {\protect[}2004{\protect]}]{Ch04}
Chen, H. and Frank, M., [2004] Monopoly pricing when customers queue, {\it IIE Transactions},
Vol.~36, No.~6, 569--581, \doi{10.1080/07408170490438690}.

\bibitem[David and Adida {\protect[}2015{\protect]}]{Ad15}
David, A., and Adida, E., [2009] Competition and Coordination in a Two-Channel Supply Chain, {\it Production and Operations Management},
Vol.~24, No.~8, 1358--1370, \doi{10.1111/poms.12327}.

\bibitem[Desai et al. {\protect[}2004{\protect]}]{De04}
Desai, P., Koenigsberg, O., and Purohit, D., [2004] Strategic Decentralization and Channel Coordination, {\it Quantitative Marketing and Economics},
Vol.~2, No.~1, 5--22, \doi{10.1023/B:QMEC.0000017033.09155.12}.

\bibitem[Dhyne et al. {\protect[}2009{\protect]}]{Dh09}
Dhyne, E., Konieczny, J., Rumler, F. and Sevestre, P., [2009] Price rigidity in the Euro area: An assessment, {\it Directorate General Economic and Financial Affairs (DG ECFIN), European Commission}, Economic Papers 2008--2015, No.~380, \doi{10.1016/j.jmoneco.2009.02.006}.

\bibitem[Einy et al. {\protect[}2010{\protect]}]{Ei10}
Einy, E., Haimanko, O., Moreno, D. and Shitovitz, B., [2010] On the existence of Bayesian Cournot equilibrium, {\it Games and Economic Behavior},
Vol.~68, No.~1, 77--94, \doi{10.1016/j.geb.2009.06.002}.

\bibitem[Esmaeli et al. {\protect[}2009{\protect]}]{Es09}
Esmaeli, M., Aryanezhad, M.-B. and Zeephongsekul, P., [2009] A game theory approach in seller–buyer supply chain, {\it European Journal of Operational Research},
Vol.~195, No.~2, 442--448, \doi{10.1016/j.ejor.2008.02.026}.

\bibitem[Gilbert and Cvsa {\protect[}2003{\protect]}]{Gi03}
Gilbert, S.M. and Cvsa, V., [2003] Strategic commitment to price to stimulate downstream innovation in a supply chain, {\it European Journal of Operational Research},
Vol.~150, No.~3, 617--639, \doi{10.1016/S0377-2217(02)00590-8}.

\bibitem[Goldberg and Hellerstein {\protect[}2011{\protect]}]{Go11}
Goldberg, P. and Hellerstein, R., [2011] How Rigid are Producer Prices?, {\it FRB of New York},
Staff Report, No.~407, \doi{10.2139/ssrn.1515778}.

\bibitem[Groznik and Heese {\protect[}2010{\protect]}]{Gr10}
Groznik, A. and Heese, S.H., [2010] Supply Chain Conflict Due to Store Brands: The Value of Wholesale Price Commitment in a Retail Supply Chain, {\it Decision Sciences},
Vol.~41, No.~2, 203--230, \doi{10.1111/j.1540-5915.2010.00265.x}.

\bibitem[Guess and Proschan {\protect[}1988{\protect]}]{Gu88}
Guess, F. and Proschan, F., [1988] Mean residual life: theory and applications, {\it Handbook of Statistics, Quality Control and Reliability},
Vol.~7, 215--224, \doi{10.1016/S0169-7161(88)07014-2}.

\bibitem[Hall and Wellner {\protect[}1981{\protect]}]{Ha81}
Hall, W. and Wellner, J., [1981] Mean residual life, {\it Proceedings of the International Symposium on Statistics and Related Topics}, 169--184, North Holland Amsterdam.

\bibitem[Hartwig et al. {\protect[}2015{\protect]}]{Ha15}
Hartwig, R., Inderfurth, K., Sadrieh, A. and Voigt, G., [2015] Strategic Inventory and Supply Chain Behavior, {\it Production and Operations Management},
Vol.~24, No.~8, 1329--1345, \doi{10.1111/poms.12325}.

\bibitem[Hu et al. {\protect[}2013{\protect]}]{Hu13}
Hu, B., Beil, D.R. and Duenyas, I., [2013] Price-Quoting Strategies of an Upstream Supplier, {\it Management Science},
Vol.~59, No.~9, 2093--2110, \doi{10.1287/mnsc.1120.1697}.

\bibitem[Jones {\protect[}2009{\protect]}]{Jo09}
Jones, C., [2009] Kumaraswam{y's} distribution: A beta-type distribution with some tractability advantages, {\it Statistical Methodology},
Vol.~6, 70--81, \doi{10.1016/j.stamet.2008.04.001}.

\bibitem[Klenow and Malin {\protect[}2010{\protect]}]{Kl10}
Klenow, P.J. and Malin B., [2010] Price Rigidity: Microeconomic Evidence and Macroeconomic Implications, {\it National Bureau of Economic Research}, Working Paper Series, No.~15826, \doi{10.3386/w18705}.

\bibitem[Kleshchelski and Vincent {\protect[}2009{\protect]}]{Kl09}
Kleshchelski, I. and Vincent, N., [2009] Market share and price rigidity, {\it Journal of Monetary Economics},
Vol.~56, No.~3, 344--352, \doi{10.1016/j.jmoneco.2009.02.006}.

\bibitem[Koki et al. {\protect[}2018{\protect]}]{Ko18}
Koki, C., Leonardos, S. and Melolidakis, C., [2018] Comparative Statics via Stochastic Orderings in a Two-Echelon Market with Upstream Demand Uncertainty, in {\it New Trends in Emerging Complex Real Life Problems:} ODS, Taormina, Italy, September 10--13, 2018, Springer International Publishing, 331--343, \doi{10.1007/978-3-030-00473-6_36}.

\bibitem[Lariviere {\protect[}2006{\protect]}]{La06}
Lariviere, M., [2006] A Note on Probability Distributions with Increasing Generalized Failure Rates, {\it Operations Research},
Vol.~54, No.~3, 602--604, \doi{10.1287/opre.1060.0282}.

\bibitem[Lariviere and Porteus {\protect[}2001{\protect]}]{La01}
Lariviere, M.A. and Porteus, E.L., [2001] Selling to the Newsvendor: An Analysis of Price-Only Contracts, {\it Manufacturing \& Service Operations Management},
Vol.~3, No.~4, 293--305, \doi{10.1287/msom.3.4.293.9971}.

\bibitem[Lagerl{\"o}f {\protect[}2006{\protect]}]{Lag06}
Lagerl{\"o}f, J., [2006] Equilibrium Uniqueness in a Cournot Model with Demand Uncertainty, {\it The B.E. Journal in Theoretical Economics},
Vol.~6, No.~1, 1--6, \doi{10.2202/1534-598X.1333}.

\bibitem[Leonardos et al. {\protect[}2021{\protect]}]{Le17}
Leonardos, S., Melolidakis, C. and Koki, C., [2021] Monopoly Pricing in Vertical Markets with Upstream Demand Uncertainty, {\it Annals of Operations Research}, \doi{10.1007/s10479-021-04067-3}.

\bibitem[Leonardos and Melolidakis {\protect[}2020{\protect]}]{Le18}
Leonardos, S., Melolidakis, C. [2020] A Class of Distributions for Linear Demand Markets, {\it arXiv e-prints}, \url{https://arxiv.org/abs/1805.06327}.

\bibitem[Leonardos and Melolidakis {\protect[}2020{\protect]}]{Leo20}
Leonardos, S., Melolidakis, C. [2020] On the Equilibrium Uniqueness in Cournot Competition with Demand Uncertainty, {\it The B.E. Journal of Theoretical Economics}, 20(02): 20190131, \doi{10.1515/bejte-2019-0131}.

\bibitem[Leonardos and Melolidakis {\protect[}2021{\protect]}]{Leo21}
Leonardos, S., Melolidakis, C. [2021] On the Mean Residual Life of Cantor-Type Distributions: Properties and Economic Applications, {\it Austrian Journal of Statistics}, 50(04): 65--77, \doi{10.17713/ajs.v50i4.1118}.

\bibitem[Melolidakis et al. {\protect[}2018{\protect]}]{Bel18}
Melolidakis, C., Leonardos, S., Koki, C. [2018] Measuring Market Performance with Stochastic Demand: Price of Anarchy and Price of Uncertainty, {\it in Belief Functions: Theory and Applications}, Springer International Publishing, 163--171, \doi{10.1007/978-3-319-99383-6_21}.

\bibitem[Li and Petruzzi {\protect[}2017{\protect]}]{Li17}
Li, M. and Petruzzi, N.C., [2017] Technical Note--Demand Uncertainty Reduction in Decentralized Supply Chains, {\it Production and Operations Management},
Vol.~26, No.~1, 156--161, \doi{10.1111/poms.12626}.

\bibitem[Marx and Schaffer {\protect[}2015{\protect]}]{Ma15}
Marx, L.M. and Schaffer, G., [2015] Cournot Competition with a Common Input Supplier, {\it Duke University}, Working paper,
\url{https://faculty.fuqua.duke.edu/~marx/bio/papers/MarxShaffer\_Cournot\_2015.pdf}.

\bibitem[Matteucci and Reverberi {\protect[}2018{\protect]}]{Ma18}
Matteucci, G. and Reverberi, P., [2018] Uniform Pricing and Product Innovation, {\it The B.E. Journal of Theoretical Economics},
Vol.~18, No.~1, 1--6, \doi{10.1515/bejte-2016-0110}.

\bibitem[Myerson and Satterthwaite {\protect[}1983{\protect]}]{My83}
Myerson, R. and Satterthwaite, M., [1983] Efficient mechanisms for bilateral trading, {\it Journal of Economic Theory},
Vol.~29, No.~2, 265--281, \doi{10.1016/0022-0531(83)90048-0}.

\bibitem[Nakamura and Steinsson {\protect[}2013{\protect]}]{Na13}
Nakamura, E. and Steinsson, J., [2013] Price Rigidity: Microeconomic Evidence and Macroeconomic Implications, {\it National Bureau of Economic Research}, Working Paper Series, No.~18705,\doi{10.3386/w18705}.

\bibitem[Osborne and Pitchik {\protect[}1986{\protect]}]{Os86}
Osborne, M. and Pitchik, C., [1986] Price competition in a capacity-constrained duopoly, {\it Journal of Economic Theory},
Vol.~38, No.~2, 238--260, \doi{10.1016/0022-0531(86)90117-1}.

\bibitem[Perakis and Roels {\protect[}2007{\protect]}]{Pe07}
Perakis, G. and Roels, G., [2007] The Price of Anarchy in Supply Chains: Quantifying the Efficiency of Price-Only Contracts, {\it Management Science},
Vol.~53, No.~8, 1249--1268, \doi{10.1287/mnsc.1060.0656}.

\bibitem[Richter {\protect[}2013{\protect]}]{Ri13}
Richter, J., [2013] Incomplete Information in {C}ournot Oligopoly: The Case of Unknown Production Capacities, {\it EWI Working Papers}, 2013-1, Energiewirtschaftliches Institut an der Universitaet zu Koeln, \url{http://EconPapers.repec.org/RePEc:ris:ewikln:2013\_001}.

\bibitem[Shaked and Shanthikumar {\protect[}2007{\protect]}]{Sh07}
Shaked, M. and Shanthikumar, G., [2007] Stochastic Orders, {\it Springer Series in Statistics}, New York.

\bibitem[Tyagi {\protect[}1999{\protect]}]{Ty99}
Tyagi, R.K., [1999] On the Effects of Downstream Entry, {\it Management Science},
Vol.~45, No.~1, 59--73, \doi{10.1287/mnsc.45.1.59}.

\bibitem[Vives {\protect[}2001{\protect]}]{Vi01}
Vives, X., [2001] Oligopoly pricing: old ideas and new tools, {\it MIT press}.

\bibitem[Wu and Zhou {\protect[}2018{\protect]}]{Wu18}
Wu, X. and Zhou, Y., [2018] Buyer-specific versus uniform pricing in a closed-loop supply chain with third-party remanufacturing, {\it European Journal of Operational Research},
\doi{10.1016/j.ejor.2018.08.028}.

\bibitem[Wu et al. {\protect[}2012{\protect]}]{Wu12}
Wu, C.-H., Chen, C.-W. and Hsieh, C.-C., [2012] Competitive pricing decisions in a two-echelon supply chain with horizontal and vertical competition, {\it International Journal of Production Economics},
Vol.~135, No.~1, 265--274, \doi{10.1016/j.ijpe.2011.07.020}.

\bibitem[Wu and Chen {\protect[}2016{\protect]}]{Wu16}
Wu, O. Q. and Chen, H., [2016] Chain-to-Chain Competition Under Demand Uncertainty, {\it Journal of the Operations Research Society of China},
Vol.~4, No.~`, 49--75, \doi{10.1007/s40305-015-0114-y}.

\bibitem[Yang and Zhou {\protect[}2006{\protect]}]{Ya06}
Yang, S.-L. and Zhou, Y.-W., [2006] Two-echelon supply chain models: Considering duopolistic retailers’ different competitive behaviors, {\it International Journal of Production Economics},
Vol.~103, No.~1, 104--116, \doi{10.1016/j.ijpe.2005.06.001}.

\bibitem[Zhao et al. {\protect[}2010{\protect]}]{Zh10}
Zhao, Y., Wang , S., Cheng, T.C.E., Yang, X. and Huang, Z., [2010] Coordination of supply chains by option contracts: A cooperative game theory approach, {\it European Journal of Operational Research},
Vol.~207, No.~2, 668--675, \doi{10.1016/j.ejor.2010.05.017}.

\end{thebibliography}
\end{document}